\documentclass[11pt]{amsart}

\usepackage{amsmath,amssymb,amsthm,mathtools}
\usepackage{enumitem}
\usepackage[hidelinks]{hyperref}
\usepackage[nameinlink,noabbrev]{cleveref}

\DeclareMathOperator{\lcc}{lcc}
\DeclareMathOperator{\val}{val}

\newtheorem{theorem}{Theorem}[section]
\newtheorem{proposition}[theorem]{Proposition}
\newtheorem{lemma}[theorem]{Lemma}
\newtheorem{corollary}[theorem]{Corollary}

\theoremstyle{definition}

\theoremstyle{remark}

\crefname{conjecture}{conjecture}{conjectures}
\Crefname{conjecture}{Conjecture}{Conjectures}
\crefname{question}{question}{questions}
\Crefname{question}{Question}{Questions}

\title[Local clique covers and chromatic number]
{Local Clique Covers and Chromatic Number}
\author{Akbar Davoodi}
\date{}
\subjclass[2020]{Primary 05C15; Secondary 05C35, 05C65, 05C75}
\keywords{local clique cover number, chromatic number, universal vertex,
critical graph, induced matching, Nordhaus--Gaddum inequality}
\hypersetup{
pdftitle={Local Clique Covers and Chromatic Number},
pdfauthor={Akbar Davoodi},
pdfsubject={The chromatic bound and structural properties of local clique covers}
}

\begin{document}

\begin{abstract}
	The local clique cover number $\lcc(G)$ is the minimum valency of an edge-clique cover of $G$. We prove the conjectured inequality
	$\lcc(G)+\chi(G)\le |V(G)|+1$ for every finite simple graph. The proof gives an independent set improvement of an endpoint-cover
	estimate and applies the resulting construction to an induced four-vertex path. We further prove that the cover can be chosen so
	that every nonuniversal vertex has valency at most $|V(G)|-\chi(G)$. Consequently, every graph attaining equality has a universal vertex. The stronger statement follows by reducing a counterexample of minimum order to a prime double-critical graph and refining the induced path extension. We also show that an induced matching of size $m\ge2$ yields $\lcc(G)+\chi(G)\le |V(G)|+3-m$, which improves the general bound when $m\ge3$, and determine the restrictions imposed by equality on deletion of an induced $2K_2$.
\end{abstract}

\maketitle

\section{Introduction}\label{sec:introduction}
An \emph{edge-clique cover} of a graph $G$ is a family of cliques whose members together contain every edge of $G$. If $\mathcal C$ is such a cover, the \emph{valency} of a vertex $v$ in $\mathcal C$ is the number of members of $\mathcal C$ that contain $v$. The valency of the cover is the maximum of these vertex valencies. The \emph{local clique cover number} $\lcc(G)$ is the minimum valency of an edge-clique cover of $G$. Thus the parameter depends on the distribution of the covering cliques among the vertices. In particular, replacing a covering clique by a larger maximal clique may increase the valencies at the added vertices.

Classical set intersection representations go back at least to Szpilrajn-Marczewski and to Erd\H{o}s, Goodman, and P\'osa
\cite{SzpilrajnMarczewski1945,ErdosGoodmanPosa1966}. Javadi, Maleki, and Omoomi \cite{JavadiMalekiOmoomi2016} observed that $\lcc(G)$ is exactly the least nonnegative integer $q$ for which there are sets $(A_v:v\in V(G))$ with $|A_v|\le q$ for every $v\in V(G)$, such that, for all distinct $u,v\in V(G)$,
\[
uv\in E(G)\quad\Longleftrightarrow\quad A_u\cap A_v\ne\varnothing.
\]
This relates local clique covering to graph coloring and hypergraph representation. A related parameter is the \emph{sigma clique cover number}, also called the edge clique cover sum, which minimizes the sum of the orders of the covering cliques, or equivalently the total vertex valency \cite{DavoodiJavadiOmoomi2016Sum}. Its edge-partition analogue is studied in \cite{DavoodiJavadiOmoomi2016PBD}. For complete multipartite graphs with a fixed common part size $d\ge2$, Davoodi, Gerbner, Methuku, and Vizer \cite{DavoodiGerbnerMethukuVizer2020} obtained asymptotically sharp bounds for the sigma clique cover number as the number of parts tends to infinity. The local parameter instead requires an upper bound on the valency at each vertex simultaneously.

Bujt\'as, Davoodi, Gy\H{o}ri, and Tuza \cite{BujtasDavoodiGyoriTuza2020} recorded the conjecture, attributed there to Ramin Javadi in 2012, that
\begin{equation}\label{eq:chromatic-bound}
	\lcc(G)+\chi(G)\le |V(G)|+1.
\end{equation}
They proved it for claw-free graphs and established the general clique-number bound
$\lcc(G)+\omega(G)\le |V(G)|+1$,
which implies the chromatic inequality for perfect graphs. General degree bounds and the equality case $\lcc(G)=\Delta(G)$ were obtained in \cite{JavadiMalekiOmoomi2016}. Our first result proves \eqref{eq:chromatic-bound} without any restrictions on $G$.

\begin{theorem}\label{thm:main}
	Every finite simple graph $G$ satisfies
	\[
	\lcc(G)+\chi(G)\le |V(G)|+1.
	\]
\end{theorem}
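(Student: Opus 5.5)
We argue by induction on $n=|V(G)|$, following the hint in the abstract. Suppose $G$ is a minimal counterexample, so $\lcc(G)\ge n+2-\chi(G)$. We want to construct a cover of valency at most $n+1-\chi(G)$.

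\textbf{Reductions.} First we reduce to a highly structured $G$.
\begin{enumerate}
\item If $G$ has a universal vertex $u$, write $G'=G-u$, so $\chi(G)=\chi(G')+1$. Take a cover $\mathcal C'$ of $G'$ of valency at most $(n-1)+1-\chi(G')$, and add $u$ to every member of $\mathcal C'$. The valency at every vertex of $G'$ is unchanged, so it is at most $n+1-\chi(G)$. We must also cover the edges from $u$ to vertices of valency $0$ in $\mathcal C'$, and keep the valency at $u$ bounded. To do this, we cover $G'$ with a family that is itself bounded; the fact that a single clique of size $\omega$ is cheap helps here. This case needs care: the valency at $u$ equals $|\mathcal C'|$, which may be large. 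The fix is to use the observation that $u$ together with a maximal clique structure can be handled by choosing $\mathcal C'$ to consist of closed neighbourhood stars.
\item If $G$ is disconnected, or its complement is disconnected (a join), we combine covers of the parts. For a join $G=G_1\vee G_2$ we have $\chi=\chi_1+\chi_2$. The edges between $G_1$ and $G_2$ are covered using the parts' colour-class structure.
\item Deleting a vertex or an edge whose removal keeps $\chi$ unchanged lets us apply induction and then repair locally. So we may assume $G$ is prime and $\chi$-critical, indeed double-critical.
\end{enumerate}

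\textbf{Basic estimate.} The baseline cover uses stars: for each vertex $v$ take the edges $vw$. The valency at $v$ is then $\deg v$ plus the number of covering cliques through $v$ coming from others. A cleaner estimate comes from the endpoint-cover bound. Fix a vertex $x$ and an independent set $I$. Cover each edge of $G$ by a clique "owned" by one endpoint, with owners chosen outside $I$ where possible. Each vertex $v$ then lies in at most $n-|I|$ cliques, roughly: one per owner that is adjacent to it, excluding the nonadjacent members of a large independent set. Taking $I$ to be a largest colour class and iterating over a colouring would give valency about $n-\chi+1$ if the savings from different colour classes could be accumulated. The key improvement to prove is an inequality of the form
\[
\lcc(G)\le n-\alpha'
\]
for a suitable independent-set parameter $\alpha'$. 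This is then combined with the critical structure, where $\chi$ is small relative to what the estimate can absorb.

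\textbf{Main step and obstacle.} Since $G$ is prime, and not complete multipartite (complete multipartite graphs are perfect, where the known bound $\lcc+\omega\le n+1$ already suffices), $G$ contains an induced $P_4$, say $a\!-\!b\!-\!c\!-\!d$. We use this path to merge savings. The nonedges $ac$, $bd$ and $ad$ allow the cliques owned by $a$ and $d$, and those owned by $b$ and $c$, to be rearranged so that one vertex's valency drops by one. This uses that each colour class of an optimal colouring can be charged against a nonneighbour. The hardest part is verifying that the rearranged cover leaves no vertex with valency above $n+1-\chi$, especially the path vertices themselves. My first attempt would be a case analysis on how $N(a)\setminus N(d)$ and $N(b)\setminus N(c)$ meet the colour classes, using criticality to supply a nonneighbour in each class.
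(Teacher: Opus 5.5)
There is a genuine gap: the step that carries the induction is never supplied. Your main step says the nonedges of an induced path $a-b-c-d$ let cliques ``owned'' by $a,d$ and by $b,c$ be rearranged. But no cover is written down, and you concede that bounding the valencies at the path vertices is still open. The estimate $\lcc(G)\le n-\alpha'$ is likewise only heuristic.

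The missing idea is a cover of all edges at the endpoints of a single edge $uv$. Put $A=N(u)\setminus N[v]$, $B=N(v)\setminus N[u]$ and $D=N(u)\cap N(v)$. Take minimum clique partitions of $G[A]$, $G[B]$, $G[D]$ and add $u$, $v$, $\{u,v\}$ to their blocks respectively, using $\{u,v\}$ alone if $D=\varnothing$. Every other vertex lies in at most one of these cliques. For an independent set $I\subseteq V(G)\setminus N[u]$, colour $G$ as follows: $A\cup\{v\}$ optimally, $D$ with fresh colours, the remaining vertices outside $I$ singly, and $\{u\}\cup I$ with one colour. Comparing this colouring with Nordhaus--Gaddum on $G[A]$ and $G[D]$ gives $\val_{\mathcal E_{uv}}(u)\le r(G)-|I|$.

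The pairing that works is the two end edges $ab$ and $cd$, not $\{a,d\}$ and $\{b,c\}$. Here $d$ is a common nonneighbour of $a,b$, and $a$ is one of $c,d$. So each path vertex has valency at most $r(G)-1$ in its own family and at most one more from the other family. Each vertex of $H=G-\{a,b,c,d\}$ gains at most two incidences. This is exactly absorbed by $\lcc(H)\le |V(H)|+1-\chi(H)\le r(G)-2$, which holds since $\chi(H)\ge\chi(G)-2$.

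The reductions do not fill this gap, and several fail as stated:
\begin{itemize}
\item Adding a universal $u$ to every member of $\mathcal C'$ gives $u$ valency $|\mathcal C'|$. Neighbourhood stars do nothing to bound this by $r(G)$.
\item For a join $X\vee Y$, the natural product cover only gives $\lcc(X)+\theta(Y)\le r(X)+r(Y)=r(G)+1$, one too many.
\item Reinserting a deleted edge when $\chi$ is unchanged costs a unit at its endpoints, with no slack since $r$ is unchanged.
\item The double-critical reduction (deleting both endpoints of an edge) already needs the endpoint construction above.
\item Primeness is asserted without argument.
\end{itemize}
None of this is needed. If $G$ is $P_4$-free, Seinsche's theorem gives $\chi(G)=\omega(G)$, and the clique-number bound $\lcc(G)\le |V(G)|+1-\omega(G)$ finishes. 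Otherwise the path argument above applies to an arbitrary $G$, with no criticality or primeness assumptions.
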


The constant is sharp. Complete graphs of order at least two and nontrivial stars attain equality, as does $K_n-e$ for $n\ge3$. The proof of \Cref{thm:main} starts with an edge $uv$. Minimum clique partitions of the exclusive and common neighborhoods of its endpoints give a cover of all edges incident with $u$ or $v$, while adding at most one incidence at each remaining vertex. An independent set of nonneighbors of an endpoint improves its valency estimate by the size of that set. In an induced path $a-b-c-d$, each end edge has a common nonneighbor at the opposite end of the path. Inserting both end edges therefore fits the loss of two colors allowed by deleting the path. The remaining $P_4$-free case follows from the clique-number bound and Seinsche's theorem \cite{Seinsche1974}.

The equality problem requires finer control than this induction alone provides. Write
\[
r(G)=|V(G)|+1-\chi(G).
\]
A vertex is \emph{universal} if it is adjacent to every other vertex. Our second main result shows that the bound can be made strict at every nonuniversal vertex simultaneously.

\begin{theorem}\label{thm:localized}
	Every finite simple graph $G$ has an edge-clique cover $\mathcal C$ such that, for every $v\in V(G)$,
	\[
	\val_{\mathcal C}(v)\le
	\begin{cases}
		r(G),&v\text{ is universal},\\
		r(G)-1,&v\text{ is not universal}.
	\end{cases}
	\]
\end{theorem}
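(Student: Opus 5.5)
We argue by minimal counterexample, following the strategy the introduction announces. Let $G$ be a counterexample of minimum order $n$. Say a cover is \emph{good} for $G$ if it meets the bounds of \Cref{thm:localized}. We first record two observations. If $G$ is complete, every vertex is universal and the $n-1$ edges at a vertex give a cover of valency $n-1\le r(G)=1$ only when $n\le2$. So for complete graphs we take the single clique $V(G)$, which has valency $1=r(K_n)$; hence $G$ is not complete. Next, if $G$ has a universal vertex $w$, apply minimality to $G-w$, for which $r(G-w)=r(G)$ since $\chi(G-w)=\chi(G)-1$. Add $w$ to every clique of a good cover of $G-w$, and also add the clique $\{w,x\}$ for any vertex $x$ left uncovered. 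This handles nonuniversal vertices. The valency of $w$ is then controlled by bounding the number of cliques, which is awkward, so instead we extend the cliques that meet each vertex. We expect this case to require the clique-number bound $\lcc+\omega\le n+1$ in a localized form, and we treat it separately.

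The main reduction goes as follows. If $\chi(G-v)=\chi(G)$ for some vertex $v$, then $r(G-v)=r(G)-1$. Take a good cover of $G-v$, which has valencies at most $r(G)-1$ at its universal vertices and at most $r(G)-2$ elsewhere. Cover the edges at $v$ by a minimum clique partition of $N(v)$, each part joined to $v$. This adds at most one incidence at each neighbor and gives $v$ valency $\theta(G[N(v)])\le |N(v)|-\alpha(G[N(v)])+1$. Using $\chi(G)\le \chi(G-v)$ and the nonneighbors of $v$, we want to show this is at most $r(G)-1$. We need care with vertices that are universal in $G-v$ but not adjacent to $v$, since those gain nothing. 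Consequently $G$ is vertex-critical. Similarly, deleting an edge $xy$ whose removal keeps $\chi$ lets us add the clique $\{x,y\}$, at the cost of one incidence at $x$ and $y$. Making this fit forces the double-criticality condition $\chi(G-x-y)=\chi(G)-2$ for adjacent $x,y$. If $G$ has a module (homogeneous set), we substitute a good cover of the quotient and of the module. This lets us assume $G$ is prime. A prime graph that is not complete and not a cograph contains an induced $P_4$ by Seinsche's theorem \cite{Seinsche1974}, and complete graphs and cographs are dispatched via perfectness and the clique-number bound.

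For the prime double-critical $G$, take an induced path $a-b-c-d$. Delete $\{a,b,c,d\}$; by criticality $\chi$ drops by at most $4$ but at least $2$, so $r(G-P)\le r(G)-2$. Extend a good cover of $G-P$ by the endpoint-cover construction used for \Cref{thm:main}, applied to the edges $ab$ and $cd$. That construction partitions the exclusive and common neighborhoods into minimum clique covers, adds at most one incidence per remaining vertex for each edge, and gains the independent-set improvement from the opposite end of the path. The main obstacle is the refinement: a vertex $x$ outside $P$ adjacent to both edge neighborhoods receives two new incidences, which is exactly the slack $2$. If $x$ is nonuniversal in $G$ but was universal in $G-P$, its old bound is only $r(G-P)\le r(G)-2$, so it reaches $r(G)$, one too many. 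To fix this, we would first use double-criticality to delete a path whose removal drops $\chi$ by $3$, via $\chi(G-a-b)=\chi-2$ together with primality to choose $c$. If that fails, we merge the two cliques through $x$ by exploiting that $x$ misses some vertex of $P$. Finally we check the four path vertices directly, since each is nonuniversal and so must receive valency at most $r(G)-1$.
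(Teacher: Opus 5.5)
Your outline follows the paper's overall architecture (minimal counterexample, vertex- and double-criticality, primeness, then an induced $P_4$). However, the decisive final step fails as written.

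\textbf{The final path step.} In the naive extension the middle vertices are not fine. The vertex $b$ has valency at most $r-1$ in $\mathcal E_{ab}$, since its only guaranteed saving is the nonneighbor $d$. It gets one more incidence from $\mathcal E_{cd}$ because $b\in N(c)\setminus N[d]$, so it can reach $r$. The same holds for $c$.

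Your repair for vertices universal in $H=G-\{a,b,c,d\}$ but not in $G$ is a path whose deletion lowers $\chi$ by three. That is impossible, because $P_4$ is bipartite. It also points the wrong way: if $\chi$ drops by $j$ then $r(H)=r-4+j$, so a larger drop raises $r(H)$. For the same reason, your inference ``drops by at least $2$, so $r(H)\le r-2$'' is reversed. The upper bound comes from bipartiteness and the lower bound from double-criticality of $ab$, so $r(H)=r-2$ exactly.

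What is actually needed is the following.
\begin{enumerate}[label=\textup{(\roman*)}]
\item In a double-critical graph every nonuniversal vertex $x$ has at least three nonneighbors. Non-domination excludes a single nonneighbor. If there are exactly two, $y$ and $z$, they must be adjacent, and in a $(k-2)$-coloring of $G-\{y,z\}$ every color class must meet $N(y)\cap N(z)$, which the singleton class $\{x\}$ does not. Hence a vertex universal in $H$ but not in $G$ sees at most one path vertex and gains at most one incidence.
\item Remove $c$ from its occurrence in $\mathcal E_{ab}$. This loses nothing, since $\mathcal E_{cd}$ covers every edge at $c$.
\item Take $b$ in an independent triple. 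One exists because, once $\overline G$ is known to be connected, Gallai's theorem gives $n\ge2k-1$, whereas $\alpha(G)\le2$ together with double-criticality would give $n-2\le2(k-2)$. Then show that every vertex of a prime noncomplete vertex-critical graph is internal to an induced $P_4$. This gives $\alpha(G-N[b])\ge2$, so $b$ has valency at most $r-2$ in $\mathcal E_{ab}$.
\end{enumerate}

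\textbf{The reductions.} These are mostly asserted rather than proved.

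The universal-vertex case, which you leave open, is a real obstacle. The vertex $w$ must be added to at most $r$ existing occurrences covering $V(G-w)$ without raising any valency there. The paper selects at most $m-\nu(H)$ occurrences along a maximum matching of $H=G-w$. It then needs $\nu(H)\ge\chi(H)-1$, which uses:
\begin{enumerate}[label=\textup{(\alph*)}]
\item the absence of adjacent true twins, so that $w$ is the only universal vertex;
\item vertex-criticality and connected complement of $H$;
\item Gallai's bound $m\ge2\chi(H)-1$;
\item a matching lemma.
\end{enumerate}

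Your double-criticality step deletes the \emph{edge} $xy$ and adds $\{x,y\}$. This does not fit: $r$ is then unchanged, and $x,y$, being nonuniversal in $G-xy$, may already have valency $r-1$. You must delete both vertices; if $xy$ is not double-critical, $\chi$ drops by exactly one and the target becomes $r-1$. Then reinsert them with $\mathcal E_{xy}$ and the endpoint saving. Finally, check that a vertex universal in $G-\{x,y\}$ but not in $G$ misses both $x$ and $y$ by non-domination, and so gains nothing.

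For modules, ``substitute a good cover of the quotient and of the module'' gives no valency accounting. The paper needs:
\begin{enumerate}[label=\textup{(\alph*)}]
\item $A,B\ne\varnothing$, from connectivity of $G$ and $\overline G$, which requires first excluding joins;
\item $|B|\ge2$;
\item transfer of criticality and double-criticality to $J=G[M]$ via chromatic substitution;
\item the strict bound $\theta(J)\le r(J)-2$, which rests on the minimum-degree bound $\delta(J)\ge\chi(J)+1$ for noncomplete double-critical graphs.
\end{enumerate}

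Finally, the inequality $\theta(F)\le|V(F)|-\alpha(F)+1$ you invoke is false (take $F$ edgeless). The correct tool is Nordhaus--Gaddum, $\theta(F)\le|V(F)|+1-\chi(F)$.
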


\begin{corollary}\label[corollary]{cor:universal}
	If $|V(G)|\ge2$ and $\lcc(G)+\chi(G)=|V(G)|+1$, then $G$ has a universal vertex. Equivalently, every graph without a universal vertex satisfies $\lcc(G)+\chi(G)\le |V(G)|$.
\end{corollary}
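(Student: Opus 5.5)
We derive \Cref{cor:universal} directly from \Cref{thm:localized}; nothing else is needed. The statement has two halves, and we prove the second, since the first is its contrapositive for $|V(G)|\ge2$.

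Let $G$ have no universal vertex. We apply \Cref{thm:localized} to $G$. It gives an edge-clique cover $\mathcal C$ in which every vertex is nonuniversal, so $\val_{\mathcal C}(v)\le r(G)-1$ for every $v\in V(G)$. The valency of $\mathcal C$ is therefore at most $r(G)-1$. Since $\lcc(G)$ is the minimum valency over all edge-clique covers,
\[
\lcc(G)\le r(G)-1=|V(G)|-\chi(G),
\]
which is the claimed inequality $\lcc(G)+\chi(G)\le|V(G)|$. We should check that $r(G)-1\ge0$, so that the bound is meaningful. This holds because $\chi(G)\le|V(G)|$. If $G$ is edgeless, the empty cover has valency $0$, which is consistent.

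For the first formulation, suppose $|V(G)|\ge2$ and $\lcc(G)+\chi(G)=|V(G)|+1$. If $G$ had no universal vertex, the argument above would give $\lcc(G)+\chi(G)\le|V(G)|$, a contradiction. Hence $G$ has a universal vertex.

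The hypothesis $|V(G)|\ge2$ is needed only to exclude the trivial graph $K_1$. There $\lcc=0$ and $\chi=1$, so equality holds, and its single vertex counts as universal vacuously. The two formulations are therefore equivalent exactly in the stated range. There is no real obstacle here: all the difficulty lies in \Cref{thm:localized}, which gives the per-vertex bound $r(G)-1$ at nonuniversal vertices simultaneously. The only point to check is that taking the maximum of the vertex valencies is what defines the cover's valency.
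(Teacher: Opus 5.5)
Your argument is correct and is exactly the paper's: the corollary is read off from \Cref{thm:localized}, since in a graph without a universal vertex every vertex receives valency at most $r(G)-1$, so $\lcc(G)\le |V(G)|-\chi(G)$. One slip in your closing remark: $K_1$ is not extremal, because $\lcc(K_1)+\chi(K_1)=0+1=1<2=|V(K_1)|+1$ (the paper notes that equality holds for complete graphs of order at least two), so the hypothesis $|V(G)|\ge2$ is redundant rather than needed to exclude $K_1$; this does not affect your proof.
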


The stronger vertexwise bound requires structural information beyond the first induction. We argue by contradiction and show that a
counterexample of minimum order would be vertex-critical and double-critical. Join decompositions and a covering extension based on a maximum matching then force its complement to be connected, while a refined covering argument excludes nontrivial modules. These reductions yield an induced four-vertex path whose insertion improves the bound at every vertex. The double-critical
input is the established minimum-degree bound \cite{KawarabayashiPedersenToft2010}; the argument does not assume the Double-Critical Graph Conjecture.

Call a graph $G$ \emph{extremal} if $\lcc(G)+\chi(G)=|V(G)|+1$. The endpoint construction also yields structural restrictions beyond the existence of a universal vertex. If $G$ contains an induced matching of size $m\ge2$, then
\[
\lcc(G)+\chi(G)\le |V(G)|+3-m.
\]
Thus an extremal graph has no induced matching of size three. Moreover, deleting the four vertices of an induced $2K_2$ from an
extremal graph preserves extremality and lowers both the chromatic number and the local clique cover number by two.

The paper is organized as follows. \Cref{sec:prelim} introduces the notation and recalls the clique-number bound. \Cref{sec:extension} proves the endpoint estimate and \Cref{thm:main}, together with the induced matching consequences. \Cref{sec:universal-extensions} gives the matching-based extension used in the structural argument. \Cref{sec:localized} proves the stronger vertexwise bound in \Cref{thm:localized}. We conclude in \Cref{sec:conclusion} with the implications for the equality problem.

\section{Preliminaries}\label{sec:prelim}

All graphs are finite and simple. We write $N(v)$ and $N[v]$ for the open and closed neighborhoods of a vertex $v$. For $X\subseteq V(G)$, $G[X]$ denotes the induced subgraph, and $G-X=G[V(G)\setminus X]$. The symbols $\alpha(G)$, $\omega(G)$, $\chi(G)$, and $\nu(G)$ denote the independence number, clique number, chromatic number, and matching number. The join $F\vee H$ is obtained from the disjoint union $F\sqcup H$ by adding all edges between the two vertex sets. In particular, $K_s\vee H$ is obtained by adjoining $s$ universal vertices to $H$.

For an edge-clique cover $\mathcal C$, put
\[
\val_{\mathcal C}(v)=|\{C\in\mathcal C:v\in C\}|,
\qquad
\lcc(G)=\min_{\mathcal C}\max_{v\in V(G)}\val_{\mathcal C}(v).
\]
A cover is $q$-local if all its valencies are at most $q$. We sometimes regard covers as indexed families, so equal vertex sets may
occur more than once. Removing repetitions never increases a valency. Singleton cliques are permitted in vertex partitions and when extending a cover through a universal vertex; otherwise they can be discarded.

Write $\theta(F)=\chi(\overline F)$ for the minimum number of cliques in a vertex partition of $F$. Vertex clique covers give the same minimum, since their overlaps can be removed. We write $\varnothing$ for the graph with no vertices and adopt the conventions
$\chi(\varnothing)=\theta(\varnothing)=\lcc(\varnothing)=0$.
More generally, $\lcc(G)=0$ for every edgeless graph, since the
empty family of cliques covers all its edges. The parameter $\lcc$ is monotone under induced subgraphs: restrict each covering clique to the smaller vertex set and discard sets of order at most one. The Nordhaus--Gaddum inequality \cite{NordhausGaddum1956} gives
\begin{equation}\label{eq:NG}
	\theta(F)+\chi(F)\le |V(F)|+1
	\qquad(F\ne\varnothing).
\end{equation}

\begin{lemma}[Clique-number bound, \cite{BujtasDavoodiGyoriTuza2020}]\label[lemma]{lem:clique-bound}
	Every nonempty graph $G$ satisfies $\lcc(G)\le |V(G)|+1-\omega(G)$.
\end{lemma}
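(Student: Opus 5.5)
We prove \Cref{lem:clique-bound} by induction on $|V(G)|$, growing the maximum clique one vertex at a time. Fix a maximum clique $K$ of $G$, put $\omega=\omega(G)=|K|$, and let $R=V(G)\setminus K$. If $R=\varnothing$, then $G=K_\omega$. The single clique $K$ is a cover of valency $1\le |V(G)|+1-\omega$ when $\omega\ge2$. When $\omega=1$, the empty cover has valency $0$. So we may assume $R\ne\varnothing$.

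Choose $x\in R$ and let $G'=G-x$. Then $K$ is still a maximum clique of $G'$, so $\omega(G')=\omega$. By induction $G'$ has a cover $\mathcal C'$ of valency at most $|V(G)|-\omega$. We must add the edges at $x$ while raising the valency of every vertex by at most one.

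The key step is to cover the edges from $x$ to $N(x)$ by cliques $\{x\}\cup Q_i$. Here $Q_1,\dots,Q_t$ is a partition of $N(x)$ into cliques, and adding these raises each neighbour's valency by exactly one. The vertex $x$ then has valency $t=\theta(G[N(x)])$, so we need $\theta(G[N(x)])\le |V(G)|+1-\omega$. This is where the choice of $x$ matters, and it is the main obstacle.

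Suppose first that $x$ has a nonneighbour $y\in K$. Then every vertex of $K\setminus N(x)$ lies outside $N[x]$, so $|N(x)|\le |V(G)|-1-|K\setminus N(x)|$. Write $s=|K\cap N(x)|$, so that $|K\setminus N(x)|=\omega-s$. Partition $N(x)$ into the clique $K\cap N(x)$ together with singletons. This gives
\[
\theta(G[N(x)])\le 1+|N(x)|-s\le |V(G)|-\omega,
\]
using $|N(x)|\le |V(G)|-1-(\omega-s)$. This is within the bound.

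Now suppose $x$ is adjacent to all of $K$. Then $K\cup\{x\}$ is a clique larger than $K$, contradicting maximality. Hence every $x\in R$ misses some vertex of $K$, and the first case always applies.

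The resulting cover of $G$ gives each old vertex valency at most $|V(G)|-\omega+1$ and gives $x$ valency at most $|V(G)|-\omega$, which completes the induction. The only delicate point is the bookkeeping of the singleton cliques $\{x,w\}$ for $w\in N(x)\setminus K$. These are genuine edges, so they are allowed, and each such $w$ gains only one incidence.
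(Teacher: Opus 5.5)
Your proof is correct and is essentially the paper's argument written as an induction. If you unroll it, using the partition of $N(x)$ into $K\cap N(x)$ plus singletons at each step, it produces exactly the paper's cover: $K$, the cliques $\{x\}\cup(N(x)\cap K)$ for $x\notin K$, and the edges of $G-K$ individually, with the same valency counts. The case split on whether $x$ misses a vertex of $K$ is harmless but unnecessary, since your estimate $\theta(G[N(x)])\le |V(G)|-\omega$ holds for every value of $s$.
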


\begin{proof}
	Fix a maximum clique $Q$ and put $t=|V(G)\setminus Q|$. Use $Q$, the cliques $\{x\}\cup(N(x)\cap Q)$ for $x\notin Q$, and every edge of $G-Q$ individually, omitting cliques of order less than two. A vertex of $Q$ has valency at most $t+1$, and a vertex outside $Q$ has valency at most $1+(t-1)=t$.
\end{proof}

\section{Endpoint extensions and the chromatic inequality}\label{sec:extension}
We first construct a cover of all edges incident with either endpoint of a fixed edge, adding at most one incidence at each
other vertex. Independent sets outside the closed neighborhoods of the endpoints sharpen their valency bounds. Applying this construction to the two end edges of an induced $P_4$ gives the extension needed for the chromatic bound.

For an edge $uv$, partition $V(G)\setminus\{u,v\}$ into
\begin{equation}\label{eq:endpoint-cells}
	\begin{aligned}
		A&=N(u)\setminus N[v],& B&=N(v)\setminus N[u],\\
		D&=N(u)\cap N(v),& C&=V(G)\setminus(N[u]\cup N[v]).
	\end{aligned}
\end{equation}
Take minimum clique partitions $\mathcal P_A,\mathcal P_B,\mathcal P_D$ of the corresponding induced subgraphs. If $D\ne\varnothing$, let
\begin{equation}\label{eq:endpoint-family}
	\mathcal E_{uv}=
	\{Q\cup\{u\}:Q\in\mathcal P_A\}
	\cup\{Q\cup\{v\}:Q\in\mathcal P_B\}
	\cup\{Q\cup\{u,v\}:Q\in\mathcal P_D\}.
\end{equation}
If $D=\varnothing$, use the first two families and the clique $\{u,v\}$. The family is defined in the full graph $G$.

\begin{lemma}[Endpoint saving]\label[lemma]{lem:endpoints}
	The family $\mathcal E_{uv}$ covers all edges incident with $u$ or $v$. Every vertex in $A\cup B\cup D$ has valency one in this family, and every vertex in $C$ has valency zero. Moreover,
	\begin{equation}\label{eq:endpoint-saving}
		\val_{\mathcal E_{uv}}(u)\le r(G)-\alpha(G-N[u]),
		\qquad
		\val_{\mathcal E_{uv}}(v)\le r(G)-\alpha(G-N[v]).
	\end{equation}
	In particular, an independent set of $s$ common nonneighbors of $u,v$ reduces both endpoint bounds to $r(G)-s$.
\end{lemma}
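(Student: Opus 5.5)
The plan is to check coverage and the valencies of the other vertices directly, and then obtain the endpoint bound by comparing $\val_{\mathcal E_{uv}}(u)$ with an explicit proper colouring of $G$. Inequality \eqref{eq:NG} is applied separately to $G[A]$ and $G[D]$, and a maximum independent set in $G-N[u]$ is merged into the colour class of $u$.

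\emph{Coverage and valencies.} Every edge at $u$ goes to $v$, to a vertex of $A$, or to a vertex of $D$. Edges $ux$ with $x\in A$ lie in $Q\cup\{u\}$ for the part $Q\in\mathcal P_A$ containing $x$, and similarly for $D$. The edge $uv$ lies in any $Q\cup\{u,v\}$, or in $\{u,v\}$ when $D=\varnothing$. Edges at $v$ are handled symmetrically. Every member of the family is a clique: $A\subseteq N(u)$, $B\subseteq N(v)$, $D\subseteq N(u)\cap N(v)$, and each $Q$ is a clique. Since $\mathcal P_A,\mathcal P_B,\mathcal P_D$ are partitions of disjoint sets, each vertex of $A\cup B\cup D$ lies in exactly one member, and vertices of $C$ lie in none.

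\emph{Endpoint bound for $u$.} The valency of $u$ is
\[
\val(u)=\theta(G[A])+\theta(G[D])+[D=\varnothing].
\]
By \eqref{eq:NG}, together with the convention for the empty graph, we have $\theta(G[X])\le |X|-\chi(G[X])+[X\ne\varnothing]$ for $X\in\{A,D\}$. Let $I$ be a maximum independent set of $G-N[u]=G[B\cup C]$, so $|I|=s=\alpha(G-N[u])$. Colour $G$ as follows.
\begin{itemize}
\item Colour $G[A]$ optimally and $G[D]$ optimally, using disjoint palettes.
\item Give $u$ a new colour, and give every vertex of $I$ that same colour; this is proper because $I$ is independent and contains no neighbour of $u$.
\item Give $v$ some colour used on $A$ if $A\ne\varnothing$ (proper, since $v$ has no neighbour in $A$), and a new colour otherwise.
\item Give each vertex of $(B\cup C)\setminus I$ its own new colour.
\end{itemize}
This gives
\[
\chi(G)\le \chi(G[A])+\chi(G[D])+1+[A=\varnothing]+|B|+|C|-s.
\]
Adding the two estimates and using $|A|+|B|+|C|+|D|=|V(G)|-2$, the indicator terms contribute $[A\ne\varnothing]+[A=\varnothing]+[D\ne\varnothing]+[D=\varnothing]=2$. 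Hence
\[
\val(u)+\chi(G)\le |V(G)|-2+1+2-s=|V(G)|+1-s,
\]
which is the claimed bound $\val(u)\le r(G)-\alpha(G-N[u])$. The bound for $v$ follows by exchanging the roles of $u,v$ and of $A,B$.

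\emph{Final remark.} A set of $s$ pairwise nonadjacent common nonneighbours of $u$ and $v$ is an independent set in both $G-N[u]$ and $G-N[v]$. So both $\alpha$ terms are at least $s$, and both endpoint bounds drop to $r(G)-s$.

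\emph{Main obstacle.} The only delicate point is the bookkeeping of empty cells. The additive $+1$ in \eqref{eq:NG} is available only for nonempty graphs, and the extra clique $\{u,v\}$ is needed exactly when $D=\varnothing$. I would handle this with the indicator terms above, which shows that these effects cancel exactly against the colour saved by placing $v$ in the palette of $A$.
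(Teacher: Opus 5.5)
Your proposal is correct and follows essentially the same route as the paper: Nordhaus--Gaddum on $G[A]$ and $G[D]$, plus the colouring in which $v$ shares the palette of $A$, $D$ gets a disjoint palette, $\{u\}\cup I$ forms one extra class, and $(B\cup C)\setminus I$ is coloured individually. The only difference is bookkeeping: you handle empty cells with indicator terms, while the paper uses $h=\max\{1,\chi(G[A])\}$ and $\max\{1,\theta(G[D])\}$. The two are equivalent.
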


\begin{proof}
	The coverage and the valencies away from the endpoints follow from the partitions. Put $a=|A|$, $b=|B|$, $c=|C|$, $d=|D|$, and
	\[
	h=\max\{1,\chi(G[A])\},\qquad j=\chi(G[D]).
	\]
	The valency at $u$ is $L_u=\theta(G[A])+\max\{1,\theta(G[D])\}$. Equation~\eqref{eq:NG}, with the empty set conventions, gives
	\begin{equation}\label{eq:empty-cells}
		\theta(G[A])\le a+1-h,
		\qquad \max\{1,\theta(G[D])\}\le d+1-j.
	\end{equation}
	Indeed, if $A$ is empty the first bound is $0\le0$, and if $D$ is empty the second is $1\le1$.
	
	Let $I\subseteq B\cup C=V(G)\setminus N[u]$ be independent. Color $A\cup\{v\}$ with $h$ colors, $D$ with $j$ disjoint colors,
	$(B\cup C)\setminus I$ individually, and $\{u\}\cup I$ with one further color. This gives
	$\chi(G)\le h+j+b+c-|I|+1$.
	Since $|V(G)|=a+b+c+d+2$, we obtain
	\[
	L_u\le a+d+2-h-j
	=|V(G)|-b-c-h-j
	\le r(G)-|I|.
	\]
	Maximize $|I|$ and then interchange $u,v$.
\end{proof}

\begin{lemma}[Induced-path extension]\label[lemma]{lem:path-extension}
	If $a,b,c,d$ induce the path $a-b-c-d$ and $H=G-\{a,b,c,d\}$, then
	\[
	\lcc(G)\le\max\{\lcc(H)+2,r(G)\}.
	\]
\end{lemma}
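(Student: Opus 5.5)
The plan is to build an explicit cover of $G$ from three pieces: an optimal cover of $H$, together with the two endpoint families $\mathcal E_{ab}$ and $\mathcal E_{cd}$ of \Cref{lem:endpoints}, both formed in the full graph $G$. Let $\mathcal C_H$ be an $\lcc(H)$-local edge-clique cover of $H$, and put
\[
\mathcal C=\mathcal C_H\cup\mathcal E_{ab}\cup\mathcal E_{cd}.
\]

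\emph{Coverage.} Every edge of $G$ either lies inside $H$ or has an endpoint in $\{a,b,c,d\}$. Edges inside $H$ are covered by $\mathcal C_H$. By \Cref{lem:endpoints}, $\mathcal E_{ab}$ covers all edges incident with $a$ or $b$, and $\mathcal E_{cd}$ covers all edges incident with $c$ or $d$. Members of $\mathcal E_{ab}$ and $\mathcal E_{cd}$ are cliques of $G$, so $\mathcal C$ is an edge-clique cover of $G$.

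\emph{Vertices of $H$.} By \Cref{lem:endpoints}, every vertex other than $a,b$ has valency at most one in $\mathcal E_{ab}$. Likewise every vertex other than $c,d$ has valency at most one in $\mathcal E_{cd}$. Hence each $x\in V(H)$ satisfies $\val_{\mathcal C}(x)\le\lcc(H)+2$.

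\emph{The vertex $a$.} Since the path is induced, $d$ is nonadjacent to both $a$ and $b$. So $\{d\}$ is an independent set of one common nonneighbor of $a$ and $b$, and the last assertion of \Cref{lem:endpoints} gives $\val_{\mathcal E_{ab}}(a)\le r(G)-1$. With respect to the edge $cd$, the vertex $a$ is adjacent to neither $c$ nor $d$, so $a$ lies in the cell $C$ and has valency $0$ in $\mathcal E_{cd}$. Therefore $\val_{\mathcal C}(a)\le r(G)-1$.

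\emph{The vertex $b$.} By the same single-vertex independent set $\{d\}$, we get $\val_{\mathcal E_{ab}}(b)\le r(G)-1$. With respect to $cd$, the vertex $b$ is adjacent to $c$ but not to $d$, so it lies in a nonempty cell and has valency exactly $1$ in $\mathcal E_{cd}$. Hence $\val_{\mathcal C}(b)\le r(G)$.

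\emph{The vertices $c$ and $d$.} The argument is symmetric, with $a$ serving as the common nonneighbor of $c$ and $d$. This gives $\val_{\mathcal C}(d)\le r(G)-1$ and $\val_{\mathcal C}(c)\le r(G)$.

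Taking the maximum over all vertices yields $\lcc(G)\le\max\{\lcc(H)+2,r(G)\}$.

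The only delicate point is the cell bookkeeping: each path vertex must receive its saving in its "own" endpoint family and pay at most one in the other family. This is exactly where inducedness of the path is used, since the endpoints $a$ and $d$ are nonadjacent to the far edge. No other step needs more than \Cref{lem:endpoints}.
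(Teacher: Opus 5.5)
Your proof is correct and follows the paper's own argument. Like the paper, you adjoin $\mathcal E_{ab}$ and $\mathcal E_{cd}$, both formed in $G$, to an optimal cover of $H$. Then $d$ and $a$ serve as the common nonneighbors that give each endpoint valency at most $r(G)-1$ in its own family, the opposite family contributes $0$ at $a,d$ and $1$ at $b,c$, and each vertex of $H$ gains at most two incidences.
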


\begin{proof}
	Adjoin the families $\mathcal E_{ab}$ and $\mathcal E_{cd}$, both defined in $G$, to an optimal cover of $H$. The vertex $d$ is a common nonneighbor of $a,b$, and $a$ is a common nonneighbor of $c,d$. Thus every endpoint has valency at most $r(G)-1$ in its own endpoint family. The opposite family contributes zero at $a,d$ and one at $b,c$. Each vertex of $H$ receives at most
	two additional incidences. Every edge outside $H$ is incident with one of the endpoint pairs, so the resulting family covers $G$ with the asserted bound on its valency.
\end{proof}

\begin{proof}[Proof of \Cref{thm:main}]
	Induct on $n=|V(G)|$. The empty graph satisfies the assertion. If $G$ is nonempty and $P_4$-free, then $\chi(G)=\omega(G)$ by
	Seinsche's theorem \cite{Seinsche1974}, and \Cref{lem:clique-bound} applies. Otherwise choose an induced path $a-b-c-d$ and put
	$H=G-\{a,b,c,d\}$, $k=\chi(G)$, and $r=n+1-k$. Since the deleted path is bipartite, $\chi(H)\ge k-2$. By induction,
	\[
	\lcc(H)\le |V(H)|+1-\chi(H)
	\le n-3-(k-2)=r-2.
	\]
	The same bound holds if $H$ is empty. Now \Cref{lem:path-extension} gives $\lcc(G)\le r$.
\end{proof}

The endpoint estimate also yields a quantitative improvement when several edges can be inserted without interaction between their endpoints.

\begin{theorem}[Induced matchings]\label{thm:induced-matching}
	If $G$ has an induced matching $M$ of size $m\ge1$, then
	\begin{equation}\label{eq:matching-extension}
		\lcc(G)\le
		\max\{\lcc(G-V(M))+m,\ r(G)-m+1\}.
	\end{equation}
	Consequently, for $m\ge2$,
	\begin{equation}\label{eq:matching-improvement}
		\lcc(G)+\chi(G)\le |V(G)|+3-m.
	\end{equation}
\end{theorem}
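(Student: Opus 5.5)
The plan is to repeat the construction of \Cref{lem:path-extension} with the $m$ edges of the induced matching in place of the two end edges of the path. Write $M=\{u_1v_1,\dots,u_mv_m\}$ and $H=G-V(M)$. I would take an optimal cover of $H$ and adjoin the $m$ endpoint families $\mathcal E_{u_iv_i}$, all defined in $G$.

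\emph{Coverage.} Since $M$ is induced, the only edges of $G[V(M)]$ are the matching edges themselves. So every edge of $G$ not inside $H$ is incident with some $u_i$ or $v_i$. By \Cref{lem:endpoints} it is then covered by $\mathcal E_{u_iv_i}$.

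\emph{Valencies at vertices of $H$.} Each vertex of $H$ gets valency at most one from each of the $m$ families. Hence its valency in the combined cover is at most $\lcc(H)+m$.

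\emph{Valencies at the endpoints.} For $j\ne i$, both $u_j$ and $v_j$ lie in the cell $C$ for the edge $u_iv_i$, because the matching is induced. So by \Cref{lem:endpoints}, the family $\mathcal E_{u_iv_i}$ contributes zero at $u_j$ and $v_j$. Thus each endpoint only receives incidences from its own family. Inside that family I would apply the last clause of \Cref{lem:endpoints}. The set $I=\{u_j:j\ne i\}$ consists of common nonneighbors of $u_i$ and $v_i$, and it is independent, again because $M$ is induced. It has size $m-1$, so both endpoint valencies are at most $r(G)-m+1$. Combining the three points gives \eqref{eq:matching-extension}.

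\emph{Deducing \eqref{eq:matching-improvement}.} Put $n=|V(G)|$, $k=\chi(G)$ and $r=r(G)$. The graph $G[V(M)]=mK_2$ is bipartite, so $\chi(H)\ge k-2$. Applying \Cref{thm:main} to $H$ gives
\[
\lcc(H)\le (n-2m)+1-(k-2)=r-2m+2 .
\]
This also holds when $H$ is empty, since then $\lcc(H)=0$ and $r-2m+2\ge 0$ in that case. Substituting into \eqref{eq:matching-extension} gives
\[
\lcc(G)\le\max\{r-m+2,\ r-m+1\}=r-m+2,
\]
which is \eqref{eq:matching-improvement}.

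I expect no real obstacle in this argument. The one step that needs care is the choice of the independent set of common nonneighbors: it must contain only one endpoint from each other matching edge, since taking both $u_j$ and $v_j$ would not give an independent set.
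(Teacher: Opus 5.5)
Your proposal is correct and follows the paper's proof essentially step for step. You adjoin the $m$ endpoint families to an optimal cover of $G-V(M)$, take one endpoint of each other matching edge as the independent set of common nonneighbors in \Cref{lem:endpoints}, and bound $\lcc(G-V(M))$ via \Cref{thm:main} together with $\chi(G-V(M))\ge\chi(G)-2$. Your explicit checks that the other matching vertices lie in the cell $C$, and of the empty residual case, are details the paper leaves implicit.
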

\begin{proof}
	For each edge of $M$, choose one endpoint of every other edge. These $m-1$ vertices form an independent set of common nonneighbors of its endpoints. Adjoin all $m$ endpoint families to a cover of $G-V(M)$. Each residual vertex gains at most $m$ incidences; each endpoint has valency at most $r(G)-m+1$ and receives no incidence from the other endpoint families. This proves
	\eqref{eq:matching-extension}. The deleted graph is bipartite, so $\chi(G-V(M))\ge\chi(G)-2$. Applying \Cref{thm:main} to the residual graph bounds the first term by $r(G)+2-m$ and proves \eqref{eq:matching-improvement}.
\end{proof}

\begin{corollary}\label[corollary]{cor:matching-rigidity}
	Every extremal graph has induced matching number at most two. If an induced $2K_2$ in an extremal graph $G$ has vertex set $S$, then $H=G-S$ is extremal and
	\[
	\chi(H)=\chi(G)-2,\qquad \lcc(H)=\lcc(G)-2.
	\]
\end{corollary}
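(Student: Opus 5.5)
The first assertion follows from \eqref{eq:matching-improvement}. If $G$ has an induced matching of size $m\ge3$, then $\lcc(G)+\chi(G)\le |V(G)|+3-m\le |V(G)|$, so $G$ is not extremal. Hence every extremal graph has induced matching number at most two.

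For the second assertion, let $M$ be the induced $2K_2$ on $S$, so $m=2$ and $H=G-S$ has $|V(H)|=|V(G)|-4$. We chain the inequalities from the proof of \Cref{thm:induced-matching} and use extremality of $G$ to force each to be tight. Write $n=|V(G)|$, $k=\chi(G)$, and $r=r(G)=n+1-k$, so extremality means $\lcc(G)=r$.

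First, \eqref{eq:matching-extension} with $m=2$ gives
\[
r=\lcc(G)\le\max\{\lcc(H)+2,\ r-1\}.
\]
The second term is smaller than $r$, so $\lcc(H)\ge r-2$. Next, $S$ induces a bipartite graph, so $\chi(H)\ge k-2$. Since $H$ is obtained from $G$ by deleting vertices, also $\chi(H)\le k$. \Cref{thm:main} applied to $H$ then gives
\[
r-2\le\lcc(H)\le n-3-\chi(H)\le n-3-(k-2)=r-2 .
\]
All inequalities in this chain are therefore equalities. Thus $\lcc(H)=r-2=\lcc(G)-2$ and $\chi(H)=k-2=\chi(G)-2$. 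Moreover,
\[
\lcc(H)+\chi(H)=(r-2)+(k-2)=n-3=|V(H)|+1,
\]
so $H$ is extremal.

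The only step needing care is the degenerate case $H=\varnothing$, i.e.\ $n=4$, so that $G=2K_2$. There the chain above is read with the conventions $\lcc(\varnothing)=\chi(\varnothing)=0$, under which \Cref{thm:main} still holds for $\varnothing$. In fact $2K_2$ is not extremal, since $\lcc=1$ and $\chi=2$ give $3<5$, so this case does not occur. Beyond that, the argument is a direct squeeze and I expect no real obstacle.
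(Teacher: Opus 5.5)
Your proof is correct and follows the paper's argument. You use \eqref{eq:matching-improvement} for $m\ge3$, then squeeze $\lcc(H)$ between the lower bound $r-2$ forced by \eqref{eq:matching-extension} and the upper bound $|V(G)|-3-\chi(H)\le r-2$ from \Cref{thm:main} and $\chi(H)\ge\chi(G)-2$; your check that the degenerate case $G=2K_2$ is not extremal is a harmless addition the paper leaves implicit.
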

\begin{proof}
	The first assertion follows from \eqref{eq:matching-improvement}. For the second, put $r=r(G)$. Equation~\eqref{eq:matching-extension} requires $\lcc(H)\ge r-2$, while \Cref{thm:main} and $\chi(H)\ge\chi(G)-2$ give
	\[
	\lcc(H)\le |V(G)|-3-\chi(H)\le r-2.
	\]
	All these inequalities are equalities.
\end{proof}

\section{Cover extensions through universal vertices}\label{sec:universal-extensions}

The endpoint construction adds new covering cliques. When a universal vertex is adjoined, one may instead enlarge selected occurrences in an existing cover. The following lemma uses a maximum matching to select sufficiently few occurrences while preserving the valencies at nonisolated vertices. Preserving these valencies will allow us to retain the stricter bound at nonuniversal vertices when a universal vertex is restored in the proof of \Cref{thm:localized}.

\begin{lemma}[Matching extraction]\label[lemma]{lem:matching-extraction}
	If $H$ is nonempty of order $m$, then for every $s\ge1$,
	\[
	\lcc(K_s\vee H)\le\max\{\lcc(H),m-\nu(H)\}.
	\]
	More precisely, any edge-clique cover of $H$ can be extended through $K_s$ without changing the valencies of nonisolated vertices of $H$, with valency at most one at its isolated vertices and at most $m-\nu(H)$ at each added vertex.
\end{lemma}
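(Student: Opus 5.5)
We will prove the more precise statement; the bound on $\lcc(K_s\vee H)$ then follows by starting from an optimal cover of $H$. Throughout, $H$ is nonempty and $\mathcal C$ is an edge-clique cover of $H$; we may assume every member of $\mathcal C$ has order at least two. Let $W=\{w_1,\dots,w_s\}$ be the clique of added vertices. The edges to cover are the edges of $H$ (already covered), the edges inside $W$, and all edges between $W$ and $V(H)$. Since every vertex of $W$ is universal in $K_s\vee H$, any vertex set of the form $S\cup W$ with $S$ a clique of $H$ is a clique of $K_s\vee H$. The idea is therefore to enlarge members of $\mathcal C$ by $W$ rather than to add new cliques containing vertices of $H$. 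Enlarging a member does not change the valency of any vertex of $H$, and each enlarged member costs exactly one incidence at every vertex of $W$.

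The first step is the selection. Fix a maximum matching $M$ of $H$, with $|M|=\nu(H)$. For each edge $xy\in M$, choose one member $C_{xy}\in\mathcal C$ containing both $x$ and $y$; one exists because $\mathcal C$ covers $xy$. Let $U$ be the set of vertices of $H$ not covered by $M$, so $|U|=m-2\nu(H)$. For each nonisolated $z\in U$, choose one member $C_z\in\mathcal C$ containing $z$. The selected family $\mathcal S$, with repetitions removed, has at most $\nu(H)+|U|$ members. Replace each selected member $S$ by $S\cup W$, and add a singleton clique $\{z\}\cup W$ for each isolated vertex $z$ of $H$. These singleton extensions are permitted under the paper's conventions; each isolated vertex lies in $U$, since it is unmatched.

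The next step is to verify the cover. Every vertex of $H$ now lies in some member containing $W$:
\begin{itemize}[nosep]
  \item a matched vertex lies in its enlarged $C_{xy}$;
  \item an unmatched nonisolated vertex lies in its enlarged $C_z$;
  \item an isolated vertex lies in its new clique $\{z\}\cup W$.
\end{itemize}
Hence all edges between $W$ and $V(H)$ are covered. The edges inside $W$ are covered as soon as at least one enlarged member exists, which holds because $H$ is nonempty. The edges of $H$ remain covered, because enlarging a clique only adds vertices.

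It remains to check valencies. A nonisolated vertex of $H$ keeps exactly its old valency, since no new member is created that contains it. An isolated vertex has valency exactly one. Each $w_i$ has valency at most $\nu(H)+|U|=\nu(H)+m-2\nu(H)=m-\nu(H)$.

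The main point to check carefully is this count at the vertices of $W$. Every vertex of $H$ needs some member containing it together with $W$, and matched vertices are paired so that one enlarged clique serves both of them. An unmatched vertex costs one clique, but unmatched vertices number exactly $m-2\nu(H)$. Since repetitions are removed, the count can only decrease, so the bound $m-\nu(H)$ is safe.

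Taking $\mathcal C$ optimal for $H$, every valency in the extended cover is at most $\max\{\lcc(H),1,m-\nu(H)\}$. Finally, $m-\nu(H)\ge1$, since a matching covers at most $m$ vertices and so $\nu(H)\le m/2$. This gives $\lcc(K_s\vee H)\le\max\{\lcc(H),m-\nu(H)\}$.
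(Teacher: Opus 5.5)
Your proof is correct and follows the paper's argument: you fix a maximum matching, select one cover member per matched edge and per unmatched vertex (using a new singleton at each isolated vertex), and enlarge the at most $\nu(H)+(m-2\nu(H))=m-\nu(H)$ selected members by $K_s$. The only difference is cosmetic: you discard all singleton members at the outset, whereas the paper discards them only at isolated vertices, so in the paper the valencies of nonisolated vertices are literally unchanged rather than merely not increased, which is harmless wherever the lemma is used.
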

\begin{proof}
	At each isolated vertex, discard any existing singleton occurrences and add exactly one singleton occurrence. For a maximum matching $M$, select one occurrence containing each edge of $M$ and one containing each unmatched vertex, counting an occurrence only once if it is selected more than once. At most
	\[
	|M|+(m-2|M|)=m-\nu(H)
	\]
	selected occurrences cover $V(H)$. Add all vertices of $K_s$ to these occurrences. The stated valency bounds follow. Since $m-\nu(H)\ge1$, the bound also absorbs the singleton valencies.
\end{proof}

\begin{lemma}\label[lemma]{lem:matching-degree}
	Let $d$ be a nonnegative integer. If $|V(F)|\ge2d+1$ and $\delta(F)\ge d$, then $\nu(F)\ge d$.
\end{lemma}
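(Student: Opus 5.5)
We prove \Cref{lem:matching-degree} by a standard augmenting argument applied to a maximum matching. The case $d=0$ is trivial, so assume $d\ge1$. Let $M$ be a maximum matching of $F$ and suppose, for a contradiction, that $|M|=k\le d-1$. Let $U$ be the set of vertices left uncovered by $M$. Then $|U|=|V(F)|-2k\ge 2d+1-2(d-1)=3$, so there are at least two unmatched vertices. Maximality of $M$ makes $U$ independent, so every neighbor of an unmatched vertex is matched.

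Pick distinct $x,y\in U$. Each has at least $d$ neighbors, all of them matched vertices. Since $\deg(x)+\deg(y)\ge 2d>2k$, there are more such incidences than matching edges. Split these incidences among the $k$ edges of $M$. By pigeonhole, some edge $pq\in M$ receives at least three edges from $\{x,y\}$; indeed, if every matching edge received at most two, the total would be at most $2k<2d$. Among three edges from $\{x,y\}$ to $\{p,q\}$, both $x$ and $y$ must occur. One of them, say $x$, is adjacent to both $p$ and $q$, and $y$ is adjacent to at least one of them. If $y\sim p$, then $x\sim q$, and the path $y\,p\,q\,x$ is $M$-augmenting. The case $y\sim q$ is symmetric, using $x\sim p$.

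The augmenting path contradicts the maximality of $M$, so $\nu(F)\ge d$. The only step needing care is the counting: we need at least three incidences on a single edge, so that both unmatched vertices reach it and one reaches both ends. The strict inequality $2d>2k$ supplies exactly this, and the order hypothesis $|V(F)|\ge 2d+1$ is used only to guarantee two unmatched vertices.
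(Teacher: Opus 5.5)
Your proof is correct and is essentially the paper's argument: both rest on the fact that a matched edge $pq$ cannot have distinct unmatched vertices adjacent to $p$ and to $q$ respectively, since that gives an augmenting path of length three, combined with a count of edges from unmatched vertices into $V(M)$. The only difference is bookkeeping: the paper sums over all of $U$ with a cap of $|U|$ incidences per matched edge to get $d|U|\le t|U|$, whereas you use just two unmatched vertices with a cap of two incidences per matched edge, which works equally well.
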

\begin{proof}
	Suppose a maximum matching $M$ has size $t<d$, and let $U$ be its unmatched set. Then $U$ is independent and $|U|\ge3$.
	For a matched edge $xy$, there cannot be distinct $u,w\in U$ with $ux,yw\in E(F)$, since $u-x-y-w$ would augment the matching. Thus at most $|U|$ incidences join $U$ to $\{x,y\}$: if both ends meet $U$, their unmatched neighbor sets are the same singleton and contribute two incidences. Consequently $d|U|\le e(U,V(M))\le t|U|$, a contradiction.
\end{proof}

\section{Localization of the maximum valency}\label{sec:localized}

To prove \Cref{thm:localized}, we refine the induced-path extension to meet the stricter bound at every nonuniversal vertex. This requires additional control both at the path vertices and at vertices that become universal after the path is deleted. We obtain this control by showing that a counterexample of minimum order is prime and double-critical and has connected complement.

A graph is \emph{vertex-critical} if deleting any vertex lowers its chromatic number. A connected graph is \emph{double-critical} if deleting the endpoints of every edge lowers its chromatic number by two. We use two standard results:
\begin{enumerate}[label=\textup{(\roman*)},leftmargin=2.6em]
	\item every noncomplete double-critical $h$-chromatic graph has minimum degree at least $h+1$ \cite[Proposition~3.9]{KawarabayashiPedersenToft2010};
	\item every vertex-critical $h$-chromatic graph with connected complement has at least $2h-1$ vertices, by Gallai's theorem \cite{Gallai1963,Stehlik2003}.
\end{enumerate}
All other reductions below follow from the extension estimates.

\subsection{Criticality and double-criticality}

Suppose that \Cref{thm:localized} fails, and choose a counterexample $G$ of minimum order. Throughout the remainder of this section, $G$ denotes this fixed counterexample, and we put
\[
n=|V(G)|,\qquad k=\chi(G),\qquad r=n+1-k.
\]
The empty graph and complete graphs satisfy the assertion, so $G$ is nonempty and noncomplete. Every proper induced subgraph has a cover with the vertexwise bounds specified in \Cref{thm:localized}.

The first reductions compare the change in the chromatic number with the incidences needed to restore one vertex or the endpoints
of an edge. The neighborhood partition below handles the former, and \Cref{lem:endpoints} handles the latter.

\begin{lemma}\label[lemma]{lem:neighborhood-partition}
	For every vertex $v$ of an arbitrary graph $F$,
	\[
	\theta(F[N(v)])\le
	\begin{cases}
		r(F),&v\text{ universal},\\
		r(F)-1,&v\text{ nonuniversal}.
	\end{cases}
	\]
\end{lemma}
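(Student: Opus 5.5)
Write $N=N(v)$, $d=|N|$ and $n=|V(F)|$. The plan is to apply the Nordhaus--Gaddum inequality \eqref{eq:NG} to $F[N]$ and then combine it with a coloring of $F$ that exploits $v$. If $N=\varnothing$, then $\theta=0$ and the bound is trivial, since $r(F)\ge1$ always and $r(F)-1\ge0$. So I assume $d\ge1$. Then \eqref{eq:NG} gives
\[
\theta(F[N])\le d+1-\chi(F[N]).
\]

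Next I will color $F$ directly. Give $F[N]$ an optimal coloring with $\chi(F[N])$ colors. Since $v$ has no neighbor outside $N$, the vertex $v$ together with any independent set $I\subseteq V(F)\setminus N[v]$ can share one new color. Every remaining vertex of $V(F)\setminus N[v]$ gets its own color. This gives
\[
\chi(F)\le \chi(F[N])+1+(n-1-d-|I|).
\]
Rearranging, $\chi(F[N])\ge \chi(F)-n+d+|I|$. Substituting into the first bound,
\[
\theta(F[N])\le d+1-\chi(F)+n-d-|I|=r(F)-|I|.
\]

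Finally, split into the two cases of the statement. If $v$ is universal, take $I=\varnothing$ to get the bound $r(F)$. If $v$ is nonuniversal, some vertex $w$ lies outside $N[v]$, and taking $I=\{w\}$ gives $r(F)-1$.

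This is the same accounting as in the proof of \Cref{lem:endpoints}, with one endpoint in place of an edge. I expect no real obstacle; the only care needed is the degenerate case $N=\varnothing$, where \eqref{eq:NG} does not apply as stated but the claim holds trivially.
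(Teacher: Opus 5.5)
Your proof is correct and follows essentially the same argument as the paper. Both apply Nordhaus--Gaddum to $F[N(v)]$ and bound $\chi(F)$ by a coloring in which $v$ shares a fresh color with a nonneighbor. In the universal case your choice $I=\varnothing$ reproduces the paper's Nordhaus--Gaddum on $F-v=F[N(v)]$, and your handling of $N(v)=\varnothing$ via $r(F)\ge1$ is fine.
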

\begin{proof}
	The universal case follows from \eqref{eq:NG} on $F-v$, with the empty case immediate. Otherwise the set $T$ of nonneighbors is nonempty. Color $N(v)$ optimally, color $T$ individually with fresh colors, and give $v$ one color used on $T$. If $N(v)\ne\varnothing$,
	\[
	\theta(F[N(v)])\le |N(v)|+1-\chi(F[N(v)])
	\le |N(v)|+1-\chi(F)+|T|=r(F)-1.
	\]
	The empty neighborhood case is immediate.
\end{proof}

\begin{lemma}\label[lemma]{lem:critical-reduction}
	If $G$ is a counterexample of minimum order to \Cref{thm:localized}, then $G$ is connected, vertex-critical, and double-critical, and it has no adjacent true twins.
\end{lemma}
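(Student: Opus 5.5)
The plan is to prove the four properties in the order connectivity, vertex-criticality, no adjacent true twins, double-criticality. Each one is obtained by deleting a small set, applying the minimality of $G$ to the remainder, and restoring the deleted vertices with a controlled number of new incidences. Throughout, the key bookkeeping is how $r$ and the set of universal vertices change under deletion. A vertex that is universal in the smaller graph may be nonuniversal in $G$ only if it misses some restored vertex.

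\emph{Connectivity.} Suppose $G=G_1\sqcup G_2$ with both parts nonempty. Then $G$ has no universal vertex, and $\chi(G)=\max\chi(G_i)$ gives $r(G)\ge r(G_i)+|V(G_{3-i})|\ge r(G_i)+1$. Taking the union of the covers supplied by minimality for $G_1$ and $G_2$ gives every vertex valency at most $r(G)-1$.

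\emph{Vertex-criticality.} Suppose $\chi(G-v)=k$, so that $r(G-v)=r-1$. Take the cover of $G-v$ given by minimality and add the cliques $\{v\}\cup Q$ for $Q$ in a minimum clique partition of $G[N(v)]$. By \Cref{lem:neighborhood-partition}, $v$ then has valency at most $r$, or at most $r-1$ if $v$ is nonuniversal. Each neighbour of $v$ gains one incidence and nonneighbours gain none. A vertex $w$ of $G-v$ has three cases:
\begin{itemize}[leftmargin=2em]
\item $w$ nonuniversal in $G-v$: it ends with at most $(r-2)+1$;
\item $w$ universal in $G-v$ and adjacent to $v$: it is universal in $G$ and ends with at most $r$;
\item $w$ universal in $G-v$ but not adjacent to $v$: it gains nothing and stays at $r-1$.
\end{itemize}
So $G$ would not be a counterexample.

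\emph{No adjacent true twins.} Let $x,y$ be adjacent with $N[x]=N[y]$. We may assume $G\ne K_2$, since $G$ is noncomplete, so $x$ is nonisolated in $G-y$. Vertex-criticality gives $r(G-y)=r$. Take the cover of $G-y$ and add $y$ to every clique containing $x$. This covers all edges at $y$, including $xy$, and gives $\val(y)=\val(x)$. Universality of $x$ and of $y$ agree in $G$, and agrees with universality of $x$ in $G-y$. Any other vertex keeps its valency, and it cannot lose universality, because missing $y$ would mean missing $x$.

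\emph{Double-criticality.} Let $uv$ be an edge with $\chi(G-u-v)=k-1$, so that $H=G-\{u,v\}$ has $r(H)=r-1$. Adjoin $\mathcal E_{uv}$ to the minimal cover of $H$. Since $G$ is connected and noncomplete, and $u$, $v$ can be chosen as in \Cref{lem:endpoints}, each nonuniversal endpoint has $\alpha(G-N[u])\ge1$ and so valency at most $r-1$, while a universal endpoint has at most $r$. Vertices of $C$ gain nothing, and vertices nonuniversal in $H$ end with at most $r-1$.

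The main obstacle is a vertex $w\in A\cup B$, say $w\in A$, that is universal in $H$: it is adjacent to everything except $v$ and would reach valency $r$. To handle this, I would first exploit the freedom in $\mathcal E_{uv}$. Instead of a separate clique $Q\cup\{u\}$ containing $w$, I would absorb $u$ into an existing clique of the $H$-cover at $w$, possible since $w$ sees all of $A$. Failing that, I would use $\chi(G)\le\chi(G-\{v,w\})+1$, since $v$ and $w$ are nonadjacent and can share a colour, and rerun the argument on the pair $\{v,w\}$, restoring $w$ as a near-universal vertex via \Cref{lem:matching-extraction}. I expect this case analysis to be the technical heart of the lemma.
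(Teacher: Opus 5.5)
\textbf{Gap in the double-criticality step.} Your arguments for vertex-criticality and for adjacent true twins match the paper's. The double-criticality step, however, has a genuine gap. You correctly isolate the bad case: a vertex $w\in A\cup B$ that is universal in $H=G-\{u,v\}$ would end at valency $r$ while being nonuniversal in $G$. But you do not resolve it, and neither remedy you propose works as stated.
\begin{enumerate}[label=\textup{(\roman*)},leftmargin=2.6em]
\item Absorbing $u$ into an existing clique of the $H$-cover through $w$ fails. Since $w$ is universal in $H$, that clique may contain vertices of $B\cup C$, which are not adjacent to $u$.
\item The fallback applies \Cref{lem:endpoints} to the nonadjacent pair $\{v,w\}$, but that lemma needs an edge. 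It also applies \Cref{lem:matching-extraction} to a vertex that is not universal in $G$, whereas that lemma is about adjoining universal vertices.
\end{enumerate}
So the lemma is left unproved at exactly the point you flag as its technical heart.

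\textbf{The missing idea.} The bad case cannot occur at all. In a vertex-critical graph, no vertex $x$ satisfies $N(x)\subseteq N(y)$ for a nonadjacent vertex $y$: a $(k-1)$-coloring of $G-x$ could give $x$ the color of $y$. Suppose $w\in A$ were universal in $H$. Then $w$ is adjacent to $u$ and to every vertex of $H-w$, so
\[
N(v)=\{u\}\cup\bigl(N(v)\cap V(H)\bigr)\subseteq N(w),
\]
while $vw\notin E(G)$. This contradicts the vertex-criticality you have already established, and the case $w\in B$ is symmetric. Hence every vertex that is universal in $H$ but not in $G$ lies in $C$, gains no incidence, and stays at most $r-1$. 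Vertices universal in $H$ that lie in $D$ are universal in $G$, so valency $r$ is allowed for them. With this observation your construction is exactly the paper's, and no case analysis is needed.

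\textbf{Minor slip in the connectivity paragraph.} The inequality $r(G)\ge r(G_i)+|V(G_{3-i})|$ is false for the component of smaller chromatic number. The inequality you actually need, $r(G)\ge r(G_i)+1$, does hold for both components. In any case, connectivity follows directly from vertex-criticality.
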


\begin{proof}
	If $v$ is not chromatically critical, then $\chi(G-v)=k$ and $r(G-v)=r-1$. Take the asserted cover of $G-v$ and insert $v$ by adjoining it to the blocks of a minimum clique partition of $N(v)$. The valency at $v$ satisfies \Cref{lem:neighborhood-partition}. Every vertex nonuniversal in $G-v$ starts with valency at most $r-2$ and gains at most one incidence. A vertex universal in $G-v$ that receives an incidence is universal in $G$; one that misses $v$ receives none. This gives the required cover, a contradiction. Hence $G$ is vertex-critical and therefore connected.
	
	In a vertex-critical graph, nonadjacent vertices $x,y$ cannot satisfy $N(x)\subseteq N(y)$: a coloring of $G-x$ with $k-1$ colors could assign $x$ the color of $y$. Suppose that an edge $uv$ is not double-critical. Criticality gives $\chi(G-\{u,v\})=k-1$, so the target on $G-\{u,v\}$ is $r-1$. Insert $\mathcal E_{uv}$ into its asserted cover. The endpoint bounds follow from
	\Cref{lem:endpoints}; old nonuniversal vertices start at most $r-2$ and gain at most one incidence. If $x$ is universal in $G-\{u,v\}$ but not in $G$, then it misses both $u,v$: missing just $u$, say, would give $N(u)\subseteq N(x)$ because $uv$ is an edge. Thus such an $x$ gains no incidence. The remaining old universal vertices may have valency $r$, as permitted. This proves double-criticality.
	
	Finally suppose $u,v$ are adjacent true twins, that is, $N[u]=N[v]$. Since $r(G-u)=r$, add $u$ to every occurrence containing $v$ in the asserted cover of $G-u$. Connectedness and $G\ne K_2$ ensure that $v$ has a neighbor in $G-u$, so $uv$ is covered as well. All retained vertices keep their universality status under this deletion. Their valencies do not change, and the valency of $u$ equals that of $v$, which has the same universality status. This again gives the required cover, a contradiction.
\end{proof}

\subsection{Join decompositions and universal vertices}

We next exclude nontrivial join decompositions. The following two lemmas handle joins with two noncomplete factors, while the
extension from \Cref{sec:universal-extensions} handles the remaining case of a universal vertex. Excluding these cases makes Gallai's order bound available.

\begin{lemma}\label[lemma]{lem:strict-partition}
	If $J$ is noncomplete and double-critical, then
	\[
	\theta(J)\le |V(J)|-\chi(J)-1=r(J)-2.
	\]
\end{lemma}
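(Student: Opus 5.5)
The plan is to turn the statement into a maximum-degree bound on the complement and then colour the complement greedily. Write $n=|V(J)|$ and $h=\chi(J)$. By definition $\theta(J)=\chi(\overline J)$. It therefore suffices to show $\chi(\overline J)\le n-h-1$.

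First, I invoke the minimum-degree bound (i) for double-critical graphs. Since $J$ is double-critical and noncomplete, every vertex of $J$ has degree at least $h+1$. Passing to the complement, every vertex of $\overline J$ has degree at most $(n-1)-(h+1)=n-h-2$. So $\Delta(\overline J)\le n-h-2$.

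Second, the greedy colouring bound $\chi(\overline J)\le\Delta(\overline J)+1$ gives
\[
\theta(J)=\chi(\overline J)\le n-h-1=|V(J)|-\chi(J)-1 .
\]
The identity $n-h-1=r(J)-2$ is immediate from $r(J)=n+1-h$, so this is the assertion.

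There is no real obstacle here: the only substantive input is (i), which is taken from \cite{KawarabayashiPedersenToft2010}, and Brooks' theorem is not needed. The one point to check is that $J$ is nonempty. This holds because $J$ is connected by the definition of double-critical and is noncomplete, so $\overline J$ has at least one vertex and the greedy bound applies.
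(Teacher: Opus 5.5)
Your proof is correct and is essentially the paper's own argument: the double-critical bound $\delta(J)\ge\chi(J)+1$ gives $\Delta(\overline J)=|V(J)|-1-\delta(J)\le|V(J)|-\chi(J)-2$, and greedy colouring of $\overline J$ finishes. Your nonemptiness check does not need connectedness, since noncompleteness alone already supplies two nonadjacent vertices.
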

\begin{proof}
	The double-critical degree bound and greedy coloring give
	\[
	\theta(J)=\chi(\overline J)
	\le\Delta(\overline J)+1
	=|V(J)|-\delta(J)
	\le |V(J)|-\chi(J)-1.
	\]
\end{proof}

\begin{lemma}\label[lemma]{lem:join}
	For nonempty graphs $X,Y$,
	\[
	\lcc(X\vee Y)\le
	\max\{\lcc(X)+\theta(Y),\lcc(Y)+\theta(X)\}.
	\]
\end{lemma}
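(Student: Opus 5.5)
We build the cover explicitly from an optimal cover of each factor and a minimum clique partition of the other. Fix an optimal edge-clique cover $\mathcal C_X$ of $X$ (valency at most $\lcc(X)$) and $\mathcal C_Y$ of $Y$, and minimum clique partitions $\mathcal P_X$ of $X$ and $\mathcal P_Y$ of $Y$, with $|\mathcal P_X|=\theta(X)$ and $|\mathcal P_Y|=\theta(Y)$. The edges of $X\vee Y$ split into three kinds: edges inside $X$, edges inside $Y$, and all edges $xy$ with $x\in V(X)$, $y\in V(Y)$. The cover will be
\[
\mathcal C=\mathcal C_X\cup\mathcal C_Y\cup\{P\cup Q:P\in\mathcal P_X,\ Q\in\mathcal P_Y\}.
\]
Each $P\cup Q$ is a clique in the join, since $P$ and $Q$ are cliques and every vertex of $X$ is adjacent to every vertex of $Y$.

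First we check coverage. Edges inside $X$ or inside $Y$ are covered by $\mathcal C_X$ or $\mathcal C_Y$. A cross edge $xy$ lies in $P\cup Q$, where $P$ is the block of $\mathcal P_X$ containing $x$ and $Q$ the block of $\mathcal P_Y$ containing $y$. These blocks exist because both are partitions of nonempty vertex sets.

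Next we count valencies. A vertex $x\in V(X)$ lies in at most $\lcc(X)$ members of $\mathcal C_X$ and in no member of $\mathcal C_Y$. It lies in exactly one block $P$ of $\mathcal P_X$, hence in exactly $|\mathcal P_Y|=\theta(Y)$ of the sets $P\cup Q$. So $\val_{\mathcal C}(x)\le\lcc(X)+\theta(Y)$, and symmetrically $\val_{\mathcal C}(y)\le\lcc(Y)+\theta(X)$ for $y\in V(Y)$. Taking the maximum gives the stated bound.

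There is no real obstacle here. The only point needing care is that the cross cliques use partitions rather than arbitrary clique covers, so each vertex gains exactly $\theta$ of the other side and not more. Nonemptiness of $X$ and $Y$ guarantees that the partitions are nonempty, so every cross edge is reached. Edgeless factors cause no trouble, since then $\lcc=0$ and $\mathcal C_X$ is empty.
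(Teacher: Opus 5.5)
Your proposal is correct and follows essentially the same approach as the paper: take optimal covers of $X$ and $Y$ and add all product cliques $P\cup Q$ from minimum clique partitions of the two factors. Each vertex of $X$ then gains exactly $\theta(Y)$ incidences and each vertex of $Y$ gains exactly $\theta(X)$. Your coverage and valency checks match the paper's argument.
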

\begin{proof}
	Take optimal edge-clique covers of $X,Y$, and add every clique $Q\cup R$ with $Q$ in a minimum clique partition of $X$ and $R$
	in one of $Y$. These product cliques cover the join edges and add $\theta(Y)$ incidences at each vertex of $X$ and $\theta(X)$ at each vertex of $Y$.
\end{proof}

\begin{proposition}\label[proposition]{prop:connected-complement}
	If $G$ is a counterexample of minimum order to \Cref{thm:localized}, then $\overline G$ is connected. In particular, $G$ has no universal vertex and $|V(G)|\ge2\chi(G)-1$.
\end{proposition}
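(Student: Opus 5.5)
The plan is to suppose $\overline G$ is disconnected and write $G=X\vee Y$ with $X,Y$ nonempty. Since $\chi$ is additive over joins, $\chi(G)=\chi(X)+\chi(Y)$. Vertex-criticality and double-criticality of $G$ (\Cref{lem:critical-reduction}) pass to each factor. Deleting a vertex or an edge inside $X$ lowers $\chi(X)$ by exactly the amount it lowers $\chi(G)$. Moreover, a factor with no edges must be a single vertex, since removing one vertex of an edgeless factor of order at least two would not lower $\chi$. I would then split according to whether $G$ has a universal vertex. Once both cases give contradictions, $\overline G$ is connected, so $G$ has no universal vertex. Gallai's bound (ii) then applies to the vertex-critical graph $G$ and gives $|V(G)|\ge 2\chi(G)-1$.

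\emph{Case 1: $G$ has no universal vertex.} Then both factors $X,Y$ are noncomplete, since vertices of a complete factor would be universal in $G$. Both are double-critical, so \Cref{lem:strict-partition} gives $\theta(X)\le r(X)-2$ and $\theta(Y)\le r(Y)-2$. By minimality, $\lcc(X)\le r(X)$ and $\lcc(Y)\le r(Y)$. \Cref{lem:join} then yields
\[
\lcc(G)\le r(X)+r(Y)-2=|V(G)|+2-\chi(G)-2=r(G)-1.
\]
This meets the nonuniversal bound at every vertex, which is a contradiction.

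\emph{Case 2: $G$ has universal vertices.} Let $K_s$ be the set of all universal vertices and put $H=G-K_s$. Here $H$ is nonempty, since $G$ is noncomplete, and $H$ has no universal vertex. Also $\chi(G)=s+\chi(H)$, so $r(G)=r(H)$. By minimality, $H$ has a cover with every valency at most $r(H)-1=r(G)-1$. Extending through $K_s$ by \Cref{lem:matching-extraction} preserves these valencies at the nonisolated vertices of $H$, and $H$ has no isolated vertices because $G$ is vertex-critical. The added vertices receive valency at most $|V(H)|-\nu(H)$. So the whole case reduces to the matching bound $\nu(H)\ge\chi(H)-1$.

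To prove the matching bound, I would use that $H$ is noncomplete and double-critical, so (i) gives $\delta(H)\ge\chi(H)+1$. If $|V(H)|\ge 2\chi(H)-1$, then \Cref{lem:matching-degree} with $d=\chi(H)-1$ finishes. The main obstacle is the case of small order. There I would decompose $H$ into its co-components $H_1,\dots,H_t$. Each $H_i$ is noncomplete, vertex-critical and double-critical with connected complement, so Gallai gives $|V(H_i)|\ge 2\chi(H_i)-1$ and (i) gives $\delta(H_i)\ge\chi(H_i)+1$. \Cref{lem:matching-degree} then gives $\nu(H_i)\ge\chi(H_i)-1$. When $t\ge2$, I would recover the missing $t-1$ edges from join edges between unmatched vertices of different co-components. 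When $t=1$, Gallai applies to $H$ directly. I expect this bookkeeping of leftover unmatched vertices to be the delicate step.
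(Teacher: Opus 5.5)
There is a genuine gap in Case 2, in the subcase where $H=G-K_s$ has $t\ge3$ co-components. Your Case 1 and the subcase $t=1$ are fine.

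\textbf{Why the $t\ge3$ plan fails.} Each co-component $H_i$ is only known to satisfy Gallai's bound $|V(H_i)|\ge2\chi(H_i)-1$. Nothing available excludes equality; you would need something like the Double-Critical Graph Conjecture, which the paper explicitly avoids. If three or more co-components attain equality, then $|V(H)|=\sum_i|V(H_i)|\le2\chi(H)-3$. Hence $\nu(H)\le\lfloor|V(H)|/2\rfloor\le\chi(H)-2$, so the bound $\nu(H)\ge\chi(H)-1$ you reduce to is simply false in that configuration. No bookkeeping of leftover vertices or rematching across join edges can rescue it. Your cross-pairing of one unmatched vertex per co-component gives only $\lfloor t/2\rfloor$ extra edges. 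That suffices for $t=2$, which does work, but not for $t\ge3$. So \Cref{lem:matching-extraction} is the wrong tool for this subcase.

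\textbf{The fix.} Your Case 1 argument never uses the absence of universal vertices. It only needs $G$ to split as a join of two noncomplete factors. So split by the number of nonsingleton anticomponents, as the paper does, rather than by the presence of universal vertices.
\begin{itemize}
\item If there are at least two nonsingleton anticomponents, take $X=H_1$ and $Y=G-V(H_1)$. Here $Y$ contains $H_2$, so it is noncomplete. Both factors are double-critical by the additivity argument. Then \Cref{lem:strict-partition,lem:join} give an $(r-1)$-local cover of $G$, even when universal vertices are present.
\item Only when there is exactly one nonsingleton anticomponent do you need the matching extraction. Then $\overline H$ is connected, Gallai gives $|V(H)|\ge2\chi(H)-1$, and \Cref{lem:matching-degree} yields $\nu(H)\ge\chi(H)-1$. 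This is exactly your $t=1$ argument.
\end{itemize}
The paper's additional reduction to $s=1$ via true twins is not needed for this step.
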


\begin{proof}
	Suppose $\overline G$ is disconnected. Express $G$ as the join of its \emph{anticomponents}, the vertex sets of the components of
	$\overline G$. Chromatic additivity shows that each factor is vertex-critical. Every nonempty vertex-critical graph is connected, and a critical edgeless factor is a singleton. Double-criticality on internal edges therefore shows that each nonsingleton factor is double-critical. Groups of factors are likewise vertex-critical and double-critical, by the same additivity calculation.
	
	If at least two anticomponents are nonsingletons, partition the factors into two groups inducing noncomplete graphs $X,Y$. Since
	$r(X)+r(Y)=r+1$, \Cref{thm:main,lem:strict-partition,lem:join} give
	\[
	\lcc(G)\le r(X)+r(Y)-2=r-1,
	\]
	a contradiction. Otherwise $G=K_s\vee H$ with $H$ noncomplete and $\overline H$ connected. The absence of true twins gives $s=1$. Put $m=|V(H)|$ and $h=\chi(H)$. The graph $H$ is vertex-critical and noncomplete, so $h\ge2$ and $H$ has no isolated vertices. Since $\overline H$ is connected, $H$ has no universal vertex, and $r(H)=m+1-h=r$. 
	Minimality supplies a cover of $H$ with every valency at most $r-1$. Gallai's theorem gives $m\ge2h-1$, and $\delta(H)\ge h-1$. Thus \Cref{lem:matching-degree} gives $\nu(H)\ge h-1$. Apply the construction in \Cref{lem:matching-extraction} to this cover.
	It preserves the old vertex valencies and bounds the valency at the added universal vertex by $m-\nu(H)\le r$. This contradiction proves connectedness of $\overline G$. The final order bound is Gallai's theorem applied to $G$.
\end{proof}

\subsection{Modules and primeness}

Having excluded nontrivial joins, we next show that $G$ has no nontrivial module. If such a module existed, chromatic substitution
would transfer criticality and double-criticality to its induced subgraph. Combining covers of that subgraph and the remaining graph would then give an $(r-1)$-local cover of $G$.

A set $M\subseteq V(G)$ is a \emph{module} if every vertex outside $M$ is complete or anticomplete to $M$. It is nontrivial if $2\le |M|\le |V(G)|-1$, and a graph is \emph{prime} if it has no nontrivial module. Suppose $M$ is a module and write
\[
\begin{aligned}
	J&=G[M],\qquad F=G-M,\\
	A&=\{x\in V(F):x\text{ is complete to }M\},\\
	B&=\{x\in V(F):x\text{ is anticomplete to }M\}.
\end{aligned}
\]
For every integer $q\ge0$, let $F_q$ be obtained from $F$ by adding a $q$-clique complete to $A$ and anticomplete to $B$.

\begin{lemma}[Chromatic substitution]\label[lemma]{lem:substitution}
	With $M$, $J$, and $F_q$ as defined above, for every $X\subseteq M$, we have 
	\[
	\chi(G-X)=\chi\bigl(F_{\chi(J-X)}\bigr).
	\]
\end{lemma}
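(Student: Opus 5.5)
We prove the two inequalities $\chi(G-X)\le\chi(F_q)$ and $\chi(F_q)\le\chi(G-X)$, where $q=\chi(J-X)$. Write $Q$ for the added $q$-clique in $F_q$. The key observation is that, from the outside, both $M\setminus X$ in $G-X$ and $Q$ in $F_q$ behave like a single vertex: each is complete to $A$ and anticomplete to $B$. Throughout, $V(G-X)=V(F)\cup(M\setminus X)$.

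\emph{Upper bound.} Fix an optimal coloring $\varphi$ of $F_q$. The $q$ vertices of $Q$ receive $q$ distinct colors, and none of these colors appears on $A$. Color $J-X$ properly with exactly those $q$ colors, which is possible because $\chi(J-X)=q$, and keep $\varphi$ on $F$. Edges inside $F$ and inside $M\setminus X$ are properly colored. The only edges between $M\setminus X$ and $F$ go to $A$, and $A$ avoids the colors of $Q$. Hence $\chi(G-X)\le\chi(F_q)$.

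\emph{Lower bound.} Fix an optimal coloring $\psi$ of $G-X$. Let $S$ be the set of colors used on $M\setminus X$. Then $|S|\ge q$, since $\psi$ restricts to a proper coloring of $J-X$. Because $A$ is complete to $M\setminus X$, no color of $S$ occurs on $A$. Choose any $q$ colors from $S$, assign them bijectively to $Q$, and keep $\psi$ on $F$. Vertices of $Q$ are adjacent only to each other and to $A$, so this coloring is proper. Hence $\chi(F_q)\le\chi(G-X)$.

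\emph{Degenerate cases.} If $X=M$, then $q=0$, $F_0=F$ and $G-X=F$, so the statement holds trivially; the arguments above also hold vacuously. The empty-graph convention covers $F=\varnothing$. There is no real obstacle here. The only point needing care is that, in the lower bound, $|S|\ge q$ guarantees enough colors forbidden on $A$; this is where the definition $q=\chi(J-X)$ is used, rather than merely $q\le|M\setminus X|$.
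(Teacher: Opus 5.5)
Your proof is correct and follows the paper's argument: in both directions you transfer a coloring by swapping the added $q$-clique for a $q$-coloring of $J-X$ on a palette that is absent from $A$, with $B$ imposing no constraint. You simply spell out more explicitly why each transferred coloring is proper, and you add a remark on the degenerate case $X=M$.
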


\begin{proof}
	Put $q=\chi(J-X)$. In a coloring of $F_q$, replace the added clique by a $q$-coloring of $J-X$ on the same palette. Conversely, the
	colors appearing on $J-X$ in a coloring of $G-X$ do not appear on $A$. Choose $q$ of these colors, recolor $J-X$ with that palette, and replace it by a $q$-clique. Vertices of $B$ impose no restriction in either direction.
\end{proof}

\begin{proposition}\label[proposition]{prop:prime}
	If $G$ is a counterexample of minimum order to \Cref{thm:localized}, then $G$ is prime.
\end{proposition}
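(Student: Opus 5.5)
We argue by contradiction. Suppose $M$ is a nontrivial module of $G$, and use the notation $J=G[M]$, $F=G-M$, with $A,B$ as above. Put $q=\chi(J)$, so that $\chi(G)=\chi(F_q)$ by \Cref{lem:substitution} with $X=\varnothing$. The plan is to construct an $(r-1)$-local edge-clique cover of $G$. Since $G$ has no universal vertex by \Cref{prop:connected-complement}, every vertex must meet the bound $r-1$, so such a cover contradicts the choice of $G$.

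\emph{Step 1: transfer criticality to $J$.} For $x\in M$, vertex-criticality of $G$ and \Cref{lem:substitution} give $\chi(F_{\chi(J-x)})<\chi(F_q)$. Since $\chi(F_p)$ is nondecreasing in $p$, this forces $\chi(J-x)<q$, so $J$ is vertex-critical. For an edge $xy$ inside $M$, double-criticality gives $\chi(F_{\chi(J-\{x,y\})})=k-2$. Moreover $\chi(F_p)$ increases by at most one when $p$ increases by one. Together these force $\chi(J-\{x,y\})=q-2$ and $\chi(F_{q-1})=k-1$. Hence each nontrivial component of $J$ is double-critical.

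The set $A$ is nonempty: otherwise $M$ would be a union of components and $G$ would be disconnected. Also $J$ is noncomplete, because a complete module of order at least two consists of adjacent true twins, which \Cref{lem:critical-reduction} excludes. Whether $J$ is connected needs a short separate check. If $J$ is disconnected with critical components, vertex-criticality of $J$ makes it a single component plus isolated vertices; isolated vertices of $J$ give nonadjacent vertices with comparable neighborhoods, which vertex-criticality of $G$ excludes. So $J$ is connected, noncomplete and double-critical, and \Cref{lem:strict-partition} gives $\theta(J)\le r(J)-2$.

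\emph{Step 2: assemble the cover.} Let $F'$ be the graph obtained from $F$ by adding a single vertex $z$ complete to $A$ and anticomplete to $B$, or the clique $F_q$, whichever gives the cleaner count. Then $F'$ is a proper induced subgraph of $G$ up to isomorphism (contract $M$ to one vertex of a clique). Minimality supplies a vertexwise cover of $F_q$ with respect to $r(F_q)=|V(F)|+q+1-k$, and likewise a cover of $J$ with bounds $r(J)$.

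Build the cover of $G$ as follows:
\begin{enumerate}[label=\textup{(\alph*)},leftmargin=2.6em]
	\item the cover of $J$;
	\item the cover of $F$ inherited from $F_q$ by deleting the added clique;
	\item for each occurrence in the $F_q$ cover that meets the added clique, its trace on $A$ joined with each block of a minimum clique partition of $J$.
\end{enumerate}

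\emph{Step 3: count valencies.} A vertex of $A$ gains at most $\theta(J)$ incidences per occurrence containing both it and added-clique vertices. Vertices of $M$ receive $\val_J+{}$(number of $A$-occurrences)$\cdot$(one block each). The main obstacle is this count. The cover of $F_q$ must be chosen so that the clique vertices have few occurrences meeting $A$, and the arithmetic must use $r(J)+r(F_q)$ versus $r$: we have $|V(J)|-q+|V(F)|+q-k=n-k$, so $r(J)+r(F_q)=r+1$.

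I would first try the cleanest choice, $F_1$ with a single vertex $z$. In its cover, $z$ has valency at most $r(F_1)-1=r-|V(J)|+q-1$, and $z$ is nonuniversal since $G$ has no universal vertex, hence $F_1$ has none at $z$'s position. Replace each occurrence $C\ni z$ by the cliques $(C-z)\cup P$ for blocks $P$ of a minimum clique partition of $J$.

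Then a vertex of $M$ has valency at most
\[
(r(J)-1)+\val(z)\le r(J)-1+r(F_1)-1 = r+1+ \bigl(r(F_1)-r(F_q)\bigr)-2,
\]
which is at most $r-1$ exactly when $r(F_1)\le r(F_q)$, i.e.\ when $\chi(F_q)-\chi(F_1)\ge q-1$. Vertices of $A$ gain at most $\theta(J)-1\le r(J)-3$ extra incidences per occurrence. If this is too much, fall back to using $F_{q}$ with \Cref{lem:matching-extraction}-style selection of occurrences of the clique vertices to reduce multiplicities.

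I expect the delicate point to be exactly balancing the losses at $A$ against the slack $\theta(J)\le r(J)-2$. This is where the strict bound from double-criticality is essential.
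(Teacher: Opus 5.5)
Your Step~1 matches the paper's opening. Transferring vertex-criticality and double-criticality from $G$ to $J$ through \Cref{lem:substitution}, and concluding $\theta(J)\le r(J)-2$, is exactly how the paper begins. (The detour about isolated vertices is unnecessary: a vertex-critical graph is simply connected in the graph sense, i.e.\ it has one component.)

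\textbf{The gap is Step~3.} You leave it open yourself, and the construction you propose does not survive the count at $A$. Suppose each occurrence $C\ni z$ is replaced by the $\theta(J)$ cliques $(C-z)\cup P$. Then a vertex $a\in A$ lying in $c_a$ occurrences together with $z$ gains $c_a(\theta(J)-1)$ incidences. Since $c_a$ can be as large as $\val(z)$, nothing bounds the valency at $a$ by $r-1$. The suggested fallback to $F_q$ with a matching-type selection is not specified and does not address this multiplicative effect.

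A second step is also unproved: the bound $\val_J(x)\le r(J)-1$ for all $x\in M$. Minimality gives only $r(J)$ at vertices universal in $J$, and with that your count on $M$ reaches $r$.

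Two smaller slips:
\begin{itemize}
\item $r(F_1)\le r(F_q)$ is equivalent to $\chi(F_q)-\chi(F_1)\le q-1$, not $\ge q-1$. This always holds, so the $M$-count is not conditional.
\item Your justification that $F_q$ is an induced subgraph of $G$ fails, because a noncomplete vertex-critical $J$ has $\omega(J)<\chi(J)$. Minimality still applies, since $|V(F_q)|<n$.
\end{itemize}

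\textbf{The missing idea.} Cover the edges between $M$ and $A$ by product cliques $Q\cup P$, with $Q$ ranging over a minimum clique \emph{partition} of $G[A]$ and $P$ over one of $J$. Each vertex of $A$ then gains exactly $\theta(J)$ incidences and each vertex of $M$ exactly $\theta(G[A])$, regardless of the cover of $F$.

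The paper takes covers of $J$ and $F$ merely from \Cref{thm:main}. It then bounds $\theta(G[A])\le r-m+h-2$, where $m=|M|$ and $h=\chi(J)$, by Nordhaus--Gaddum on $G[A]$ combined with
$k\le\chi(G[A])+\max\{h,\chi(G[B])\}\le\chi(G[A])+h+|B|-2$.
This needs $|B|\ge2$, which follows from non-domination: $B=\{b\}$ would give $N(b)\subseteq A\subseteq N(x)$ for $x\in M$. Combine this with $\theta(J)\le m-h-1$ and $\lcc(F)\le r(F)=r-m+t$, where $1\le t=k-\chi(F)\le h$. The resulting valencies are at most $r-1$ on $M\cup A$ and at most $r-2$ on $B$.

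\textbf{Repairing your $F_1$ route.} The occurrences of $z$ in an $(r(F_1)-1)$-local cover of $F_1$ yield a clique partition of $A$ with at most $r(F_1)-1\le r-m+h-1$ blocks. This is one more than the paper's bound, so you would also need $J$ to have no universal vertex. You can arrange that by taking $M$ inclusion-minimal, since a universal vertex $u$ of $J$ makes $M\setminus\{u\}$ a smaller nontrivial module.
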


\begin{proof}
	Suppose $M$ is nontrivial, with notation as above, and put $m=|M|$, $h=\chi(J)$. Connectedness of $G$ and $\overline G$ gives
	$A\ne\varnothing\ne B$. If $B=\{b\}$, then $N(b)\subseteq A\subseteq N(x)$ for every $x\in M$, contrary to criticality. Thus $|B|\ge2$.
	
	By \Cref{lem:substitution}, criticality of $G$ implies $\chi(J-x)=h-1$ for every $x\in M$, and hence $\chi(F_{h-1})=k-1$. Thus $J$ is vertex-critical and connected. If $xy\in E(J)$ and $\chi(J-\{x,y\})\ge h-1$, then \Cref{lem:substitution} and monotonicity of $\chi(F_q)$ in $q$ give $\chi(G-\{x,y\})\ge\chi(F_{h-1})=k-1$, contrary to double-criticality of $G$. Since deleting two vertices lowers the chromatic number by at most two, $J$ is double-critical. It is noncomplete, since a clique module would contain adjacent true twins. In particular $h\ge2$, and the double-critical degree bound gives $m\ge\delta(J)+1\ge h+2$. \Cref{lem:strict-partition} therefore gives $\theta(J)\le m-h-1$.
	
	Put $t=k-\chi(F)$. Criticality gives $t\ge1$, while using $h$ fresh colors on $J$ gives $t\le h$. Since $J$ and $G[B]$ are anticomplete and $h,|B|\ge2$,
	\[
	k\le\chi(G[A])+\max\{h,\chi(G[B])\}
	\le\chi(G[A])+h+|B|-2.
	\]
	Nordhaus--Gaddum on the nonempty graph $G[A]$ yields
	\begin{equation}\label{eq:module-saving}
		\theta(G[A])\le |A|+1-\chi(G[A])\le r-m+h-2.
	\end{equation}
	Take covers of $J$ and $F$ given by \Cref{thm:main}, and add all product cliques from minimum clique partitions of $J$ and $G[A]$.
	The resulting valencies are at most
	\[
	\begin{array}{rcll}
		(m+1-h)+(r-m+h-2)&=&r-1,&\quad\text{on }M,\\[1mm]
		(r-m+t)+(m-h-1)&\le&r-1,&\quad\text{on }A,\\[1mm]
		r-m+t&\le&r-2,&\quad\text{on }B.
	\end{array}
	\]
	Every edge between $M$ and $A$ is covered, and there are no edges between $M$ and $B$. This is an $(r-1)$-local cover of $G$, a contradiction.
\end{proof}

\subsection{A strict induced path extension}

We now return to the induced path extension. The first lemma allows a prescribed vertex of $G$ to occupy an internal position on an
induced $P_4$; choosing this vertex from an independent triple will provide the extra endpoint saving. The second lemma controls vertices that become universal after the path is deleted.

\begin{lemma}\label[lemma]{lem:internal-path}
	Let $F$ be a prime noncomplete vertex-critical graph. Every vertex of $F$ is internal to an induced $P_4$.
\end{lemma}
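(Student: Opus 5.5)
We fix a vertex $v$ of $F$ and look for an induced path $a-v-c-d$. Such a path consists of two neighbors $a,c$ of $v$ and a nonneighbor $d$ of $v$ with $ac\notin E(F)$, $cd\in E(F)$ and $ad\notin E(F)$. The plan is to assume that no such path exists and derive a contradiction, using primeness first and vertex-criticality second.

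\emph{Preliminaries.} Put $n=|V(F)|$, $k=\chi(F)$, $N=N(v)$, and let $T$ be the set of nonneighbors of $v$ other than $v$.
\begin{enumerate}[label=\textup{(\roman*)},leftmargin=2.6em]
\item We have $n\ge3$. If $n\le2$, a noncomplete graph is $2K_1$, and $2K_1$ is not vertex-critical, since deleting a vertex leaves $\chi=1$.
\item Suppose $v$ is universal. Then $V(F)\setminus\{v\}$ is a module with $2\le n-1\le n-1$, which contradicts primeness. Hence $T\ne\varnothing$.
\item Vertex-criticality gives $\delta(F)\ge k-1$. As in the proof of \Cref{lem:critical-reduction}, a vertex-critical graph is connected.
\end{enumerate}

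\emph{Key step.} Decompose $F[N]$ into its anticomponents, that is, the vertex sets of the components of $\overline{F[N]}$. Let $C$ be an anticomponent and $d\in T$. We claim that $d$ is complete or anticomplete to $C$. Suppose instead that $d$ is adjacent to some $c\in C$ and nonadjacent to some $a\in C$. Choose a path from $c$ to $a$ in the complement of $F[C]$. Along this path there are consecutive vertices $c',a'$ with $c'd\in E(F)$ and $a'd\notin E(F)$. Consecutive vertices of the complement path are nonadjacent in $F$, so $a'c'\notin E(F)$. Since $d\in T$, also $vd\notin E(F)$. Therefore $a'-v-c'-d$ is an induced $P_4$ with $v$ internal, which contradicts the assumption.

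\emph{Module contradiction.} Each anticomponent $C$ is complete to $N\setminus C$ by definition, and complete to $v$ since $C\subseteq N$. By the key step, every vertex of $T$ is complete or anticomplete to $C$. Hence every anticomponent is a module of $F$. Since $T\ne\varnothing$, we have $|C|\le n-1$. Primeness therefore forces every anticomponent to be a singleton, so $N$ is a clique.

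\emph{Criticality contradiction.} Now $N\cup\{v\}$ is a clique, so $|N|+1\le\omega(F)\le k$. Together with $|N|\ge\delta(F)\ge k-1$, this gives $|N|=k-1$, so $N\cup\{v\}$ is a $k$-clique. This case also covers $N=\varnothing$, where $k=1$. A $k$-chromatic vertex-critical graph that contains $K_k$ equals $K_k$: deleting a vertex outside the clique would leave chromatic number $k$. This contradicts noncompleteness, and the lemma follows.

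The only delicate point is the key step. It relies on walking along the complement path inside an anticomponent to find the alternating pair $c',a'$.
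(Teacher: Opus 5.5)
Your proof is correct and follows the paper's argument essentially step for step. You show that each nonneighbor of $v$ is uniform on every anticomponent of $F[N(v)]$ via a complement path, so each anticomponent is a module and primeness makes $N(v)$ a clique; then $\delta(F)\ge\chi(F)-1$ yields a $\chi(F)$-clique, which vertex-criticality forces to be all of $F$. Your preliminaries (i)--(iii) are not needed: $|C|\le|N(v)|\le n-1$ holds automatically because $v\notin N(v)$.
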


\begin{proof}
	Let $v$ be a vertex for which this fails. Each nonneighbor $y$ of $v$ has uniform adjacency to every anticomponent $M$ of the neighborhood graph $F[N(v)]$. Otherwise, along a complement path in $M$, there are consecutive vertices $x,x'$ such that $yx$ is an edge and $yx'$ is not; then $y-x-v-x'$ is an induced $P_4$. Consequently each such $M$ is a module of $F$: other anticomponents and $v$ are complete to it, and every nonneighbor of $v$ is uniform on it. Primeness forces each anticomponent to be a singleton. Thus $N(v)$ is a clique. Criticality gives $d(v)\ge\chi(F)-1$, so $N[v]$ contains a clique of order $\chi(F)$. A vertex-critical graph containing a clique of order $\chi(F)$ must be that clique itself, contrary to the hypothesis.
\end{proof}

\begin{lemma}\label[lemma]{lem:three-nonneighbors}
	Let $F$ be a double-critical graph. Every nonuniversal vertex of $F$ has at least three nonneighbors.
\end{lemma}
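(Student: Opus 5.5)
The proof is a direct recoloring argument in which the double-critical hypothesis is applied to a single edge at a nonneighbor of $v$; no earlier lemma of the paper is needed. Put $k=\chi(F)$. Suppose that $v$ is nonuniversal and that its set $T$ of nonneighbors satisfies $1\le|T|\le2$. We will produce a $(k-1)$-coloring of $F$, which is a contradiction.

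\emph{Choice of the edge.} Fix $y\in T$, and choose a neighbor $w$ of $y$ as follows.
\begin{itemize}
\item If $T=\{y,z\}$ and $yz\in E(F)$, take $w=z$.
\item Otherwise take $w$ to be any neighbor of $y$. It exists because a double-critical graph is connected and $F$ has at least two vertices.
\end{itemize}
Note that $w\ne v$, since $y\notin N(v)$.

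\emph{Coloring the rest.} By double-criticality, $\chi(F-\{y,w\})=k-2$; fix such a coloring. The vertex $v$ survives the deletion. It is adjacent to every vertex of $F-\{y,w\}$ except those in $T\setminus\{y,w\}$. Hence the color class of $v$ is contained in $\{v\}\cup(T\setminus\{y,w\})$. This set is $\{v\}$ or $\{v,z\}$, where $z$ is the second nonneighbor when $|T|=2$. The class can contain $z$ only if $z\ne w$, and by the choice of $w$ that happens only when $yz\notin E(F)$.

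\emph{Extension.} Give $y$ the color of $v$. This is proper, because $y$ is nonadjacent to $v$, and it is nonadjacent to $z$ whenever $z$ lies in that class. Then give $w$ one new color. The result is a proper coloring of $F$ with $k-1$ colors, contradicting $\chi(F)=k$.

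\emph{Main obstacle.} The only delicate point is the case $|T|=2$ with $yz\in E(F)$. There, choosing $w=z$ removes $z$ before $y$ is recolored, so the possible conflict between $y$ and $z$ never arises.
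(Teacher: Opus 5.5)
Your proof is correct. In the decisive case $T=\{y,z\}$ with $yz\in E(F)$, it is the paper's argument. The paper shows that every color class of a $(\chi(F)-2)$-coloring of $F-\{y,z\}$ must meet $N(y)\cap N(z)$, and then notes that the class of the nonuniversal vertex is a singleton. Your recoloring ($y$ joins the class of $v$, $z$ takes a fresh color) is exactly that general recoloring applied to this singleton class.

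The routes differ in the two easy cases, $|T|=1$ and $T=\{y,z\}$ with $yz\notin E(F)$. The paper handles them with the non-domination property of vertex-critical graphs: there $N(y)\subseteq N(v)$, so in a $(\chi(F)-1)$-coloring of $F-y$ the vertex $y$ could take the color of $v$. You instead run the same double-critical recoloring on an arbitrary edge $yw$ at $y$. This works because the class of $v$ in $F-\{y,w\}$ lies in $\{v\}\cup(T\setminus\{y,w\})$, which is anticomplete to $y$ in every case you allow.

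Your version is uniform and uses only the definition of double-criticality. The paper's version reuses the non-domination fact already proved in \Cref{lem:critical-reduction}. It also states the standard fact that, for every edge $yz$ of a double-critical graph, each color class of $F-\{y,z\}$ meets $N(y)\cap N(z)$.
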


\begin{proof}
	Double-critical graphs are vertex-critical. A unique nonneighbor contradicts the non-domination property used in \Cref{lem:critical-reduction}. If a vertex $x$ has exactly two nonneighbors $y,z$, the same property forces $yz\in E(F)$.
	In a $(\chi(F)-2)$-coloring of $F-\{y,z\}$, the vertex $x$, which is universal in $F-\{y,z\}$, is the only vertex of its color. But every color in such a coloring must occur in $N(y)\cap N(z)$: if a color class contains no common neighbor, put its vertices adjacent to $y$ together with $z$ in one new color, and put its remaining vertices together with $y$ in the old color. This produces a $(\chi(F)-1)$-coloring. The singleton color of $x$ gives a contradiction.
\end{proof}

\begin{proof}[Proof of \Cref{thm:localized}]
	Continue with the smallest counterexample fixed above. By \Cref{prop:connected-complement,prop:prime}, it is prime, has no
	universal vertex, and satisfies $n\ge2k-1$. It has an independent triple: otherwise a $(k-2)$-coloring after deleting an edge would give $n-2\le2(k-2)$, contrary to this order bound. Choose a vertex $b$ of an independent triple. By \Cref{lem:internal-path}, there is an induced path $a-b-c-d$; moreover $\alpha(G-N[b])\ge2$.
	
	Put $H=G-\{a,b,c,d\}$. Since the deleted path is bipartite, $k\le\chi(H)+2$. Also, $H$ is an induced subgraph of $G-\{a,b\}$, so double-criticality of $ab$ gives $\chi(H)\le\chi(G-\{a,b\})=k-2$. Thus $\chi(H)=k-2$ and $r(H)=r-2$. Take the asserted cover of $H$ and adjoin $\mathcal E_{ab}$ and $\mathcal E_{cd}$, defined in $G$. In its own endpoint family, $b$ has valency at most $r-2$, while $a,c,d$ have valency at most $r-1$. Remove $c$ from its unique occurrence in $\mathcal E_{ab}$. This preserves coverage: every edge incident with $c$ is covered by $\mathcal E_{cd}$, and no other covered edge is lost. Discard a singleton residue if necessary. The vertices $a,d$ have no opposite family incidence, $b$ has at most one, and $c$ has none. Thus all four path vertices have valency at most $r-1$.
	
	Every nonuniversal vertex of $H$ starts with valency at most $r-3$ and gains at most two incidences. If $x$ is universal in $H$, it is nevertheless nonuniversal in $G$. Thus \Cref{lem:three-nonneighbors} gives at least three nonneighbors of $x$ in $G$, all on the deleted path. Hence $x$ has at most one neighbor on that path and gains at most one incidence. Its final valency is at most $r(H)+1=r-1$. The resulting cover is $(r-1)$-local, contradicting the choice of $G$. This proves \Cref{thm:localized} and \Cref{cor:universal}.
\end{proof}

\section{Concluding remarks}\label{sec:conclusion}

The endpoint estimate provides a common construction for the chromatic inequality and its induced matching strengthening. On an induced four-vertex path, the savings at the endpoints compensate for the additional covering incidences at the two middle vertices. This gives a short induction for the general bound. To obtain the simultaneous strict bound at nonuniversal vertices, the same construction is combined with criticality, join decompositions, and the elimination of nontrivial modules.

The stronger theorem identifies a necessary feature of every extremal graph: at least one vertex is universal. Consequently, the classification of equality cases reduces to graphs of the form $K_1\vee H$. The induced matching consequences impose further restrictions on such graphs, while \Cref{lem:matching-extraction} shows that a sufficiently large matching in $H$ can prevent equality. A complete characterization must determine when a cover of $H$ can be extended through the added vertex without reaching the chromatic bound. The proofs above identify both the local covering estimates and the structural restrictions available for that problem.

\bibliographystyle{plain}
\bibliography{Refs}

@article{BujtasDavoodiGyoriTuza2020,
	author  = {Csilla Bujt{\'a}s and Akbar Davoodi and Ervin Gy{\H{o}}ri and Zsolt Tuza},
	title   = {Clique coverings and claw-free graphs},
	journal = {European Journal of Combinatorics},
	volume  = {88},
	pages   = {103114},
	year    = {2020},
	doi     = {10.1016/j.ejc.2020.103114}
}

@article{DavoodiGerbnerMethukuVizer2020,
	author  = {Akbar Davoodi and D{\'a}niel Gerbner and Abhishek Methuku and
	M{\'a}t{\'e} Vizer},
	title   = {On clique coverings of complete multipartite graphs},
	journal = {Discrete Applied Mathematics},
	volume  = {276},
	pages   = {19--23},
	year    = {2020},
	doi     = {10.1016/j.dam.2019.09.014}
}

@article{DavoodiJavadiOmoomi2016PBD,
	author  = {Akbar Davoodi and Ramin Javadi and Behnaz Omoomi},
	title   = {Pairwise balanced designs and sigma clique partitions},
	journal = {Discrete Mathematics},
	volume  = {339},
	number  = {5},
	pages   = {1450--1458},
	year    = {2016},
	doi     = {10.1016/j.disc.2015.09.008}
}

@article{DavoodiJavadiOmoomi2016Sum,
	author  = {Akbar Davoodi and Ramin Javadi and Behnaz Omoomi},
	title   = {Edge Clique Covering Sum of Graphs},
	journal = {Acta Mathematica Hungarica},
	volume  = {149},
	number  = {1},
	pages   = {82--91},
	year    = {2016},
	doi     = {10.1007/s10474-016-0586-1}
}

@article{ErdosGoodmanPosa1966,
	author = {Paul Erd{\H{o}}s and Adolph Winkler Goodman and Louis P{\'o}sa},
	title   = {The Representation of a Graph by Set Intersections},
	journal = {Canadian Journal of Mathematics},
	volume  = {18},
	number  = {1},
	pages   = {106--112},
	year    = {1966},
	doi     = {10.4153/CJM-1966-014-3}
}

@article{Gallai1963,
	author  = {Tibor Gallai},
	title   = {Kritische {Graphen}. {II}},
	journal = {Magyar Tud. Akad. Mat. Kutat{\'o} Int. K{\"o}zl.},
	volume  = {8},
	number  = {3},
	pages   = {373--395},
	year    = {1963}
}

@article{JavadiMalekiOmoomi2016,
	author  = {Ramin Javadi and Zeinab Maleki and Behnaz Omoomi},
	title   = {Local Clique Covering of Claw-Free Graphs},
	journal = {Journal of Graph Theory},
	volume  = {81},
	number  = {1},
	pages   = {92--104},
	year    = {2016},
	doi     = {10.1002/jgt.21864}
}

@article{KawarabayashiPedersenToft2010,
	author  = {Kawarabayashi, Ken-ichi and Pedersen, Anders Sune and Toft, Bjarne},
	title   = {Double-Critical Graphs and Complete Minors},
	journal = {Electronic Journal of Combinatorics},
	volume  = {17},
	number  = {1},
	pages   = {R87},
	year    = {2010},
	doi     = {10.37236/359}
}

@article{NordhausGaddum1956,
	author = {Edward Alfred Nordhaus and Jerry William Gaddum},
	title   = {On Complementary Graphs},
	journal = {American Mathematical Monthly},
	volume  = {63},
	number  = {3},
	pages   = {175--177},
	year    = {1956},
	doi     = {10.2307/2306658}
}

@article{Seinsche1974,
	author  = {D. Seinsche},
	title   = {On a property of the class of {$n$}-colorable graphs},
	journal = {Journal of Combinatorial Theory, Series B},
	volume  = {16},
	number  = {2},
	pages   = {191--193},
	year    = {1974},
	doi     = {10.1016/0095-8956(74)90063-X}
}

@article{Stehlik2003,
	author  = {Mat\v{e}j Stehl{\'i}k},
	title   = {Critical graphs with connected complements},
	journal = {Journal of Combinatorial Theory, Series B},
	volume  = {89},
	number  = {2},
	pages   = {189--194},
	year    = {2003},
	doi     = {10.1016/S0095-8956(03)00069-8}
}

@article{SzpilrajnMarczewski1945,
	author  = {Edward Szpilrajn-Marczewski},
	title   = {Sur deux propri{\'e}t{\'e}s des classes d'ensembles},
	journal = {Fundamenta Mathematicae},
	volume  = {33},
	number  = {1},
	pages   = {303--307},
	year    = {1945},
	doi     = {10.4064/fm-33-1-303-307}
}

\end{document}